\documentclass[12pt, a4paper, reqno, twoside, makeidx]{amsart}
\usepackage[margin=3cm, marginpar=3cm]{geometry}

\usepackage{verbatim}
\usepackage{amsmath}
\usepackage{amsfonts}
\usepackage{amssymb}
\usepackage{graphicx}
\usepackage{color}
\usepackage{empheq}
\usepackage{braket}
\usepackage[font={small,it}]{caption}
\usepackage{xypic} 
\usepackage[all]{xy}

\usepackage{amssymb}
\newcommand*{\QEDB}{\hfill\ensuremath{\square}}%

\usepackage{tikz}
\usetikzlibrary{calc}

\theoremstyle{remark}

\theoremstyle{plain}
\newtheorem{thm}{Theorem}[section]
\newtheorem{lem}[thm]{Lemma}
\newtheorem{prop}[thm]{Proposition}

\newtheorem{conj}[thm]{Conjecture}
\newtheorem{Question}[thm]{Question}

\theoremstyle{definition}

\newcommand{\Z}{\mathbb{Z}}

\newcommand{\R}{\mathbb{R}}

\newcommand{\Laur}{\mathbb{Z}_2[U,U^{-1}]}

\newcommand{\F}{\mathbb{Z}_2}

\newcommand{\x}{\mathbf{x}}
\newcommand{\y}{\mathbf{y}}

\newcommand{\s}{\mathfrak{s}}

\newcommand{\Spin}{\text{Spin}}
\newcommand{\Spinc}{\text{Spin}^c}

\newcommand{\Char}{\text{Char}(Q_\Gamma)}

\author{Paolo Aceto}
\address{University of Oxford - Mathematical Institute}
\email{Paolo.Aceto@maths.ox.ac.uk}

\author{Antonio Alfieri}
\address{Central European University, Budapest, Hungary}
\email{alfieri\_antonio@phd.ceu.edu}

\begin{document}
\title[On sums of torus knots concordant to alternating knots]{On sums of torus knots concordant \\ to alternating knots}
\begin{abstract}  
We consider the question, asked by  Friedl, Livingston and Zentner, 
of which sums of torus knots are concordant to alternating knots. 
After a brief analysis of the problem in its full generality,
we focus on sums of two torus knots. We describe some effective 
obstructions based on Heegaard Floer homology. 
\end{abstract}

\begin{tiny}
MSC class: 57M25, 57M27 
\end{tiny}

\maketitle
\thispagestyle{empty}
\section{Introduction}
In \cite{FLZalternating} Friedl, Livingston and Zentner studied the knot concordance 
group $\mathcal{C}$ modulo the subgroup $\mathcal{C}_\text{alt} \subset \mathcal{C}$ 
spanned by alternating knots. In particular they ask the following question. 
\begin{Question}
Which sums of torus knots are concordant to alternating knots?
\end{Question}

According to Murasugi \cite{MurasugiAlternatingKnots} the Alexander polynomial of 
an alternating knot is alternating, meaning that its coefficients are non-zero 
and alternate in sign. Using this criterion one can see that a $(p,q)$ torus knot 
is alternating if and only if $(p,q)=(2n+1,2)$ for some $n\geq 0$. 
Murasugi's theorem was re-proved by  Ozsv\' ath and Szab\' o in \cite{OS8} where 
they find that the knot Floer homology $\widehat{HFK}_{i,j}(K)$ of an alternating 
knot $K$ is supported on the diagonal $i-j= \sigma/2$, where $\sigma$ denotes the 
knot signature.  As a consequence of this theorem one can further prove that a linear 
combination of torus knots is alternating if and only if is a sum of $(2n+1,2)$ 
torus knots. Based on these facts we na\" ively formulate the following 
conjecture.

\begin{conj}\label{mainconjecture} A sum of torus knots is concordant to an alternating knot if and only if it is a sum of $(2n+1,2)$ torus knots. 
\end{conj}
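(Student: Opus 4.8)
The ``if'' direction is immediate: each $(2n+1,2)$ torus knot is alternating, a connected sum of alternating knots is alternating, so any sum of $(2n+1,2)$ torus knots (and of their mirrors) is an alternating knot, hence \emph{a fortiori} concordant to one. The content is the ``only if'' direction, and the plan is to exploit the fact that a knot concordant to an alternating knot is ``thin up to concordance'': by the thinness theorem of Ozsv\'ath and Szab\'o the full knot Floer complex of an alternating knot $A$ is determined by $\sigma(A)$, so if a sum of torus knots $K$ is concordant to $A$ then $\tau(K)=\tau(A)=-\sigma(A)/2$, the invariants $\nu^+(K)$ and $\varepsilon(K)$ and the Upsilon function $\Upsilon_K=\Upsilon_A$ take the special values forced by $\sigma(K)=\sigma(A)$ (in particular $\Upsilon_K(t)=-\tau(K)\,t$ on $[0,1]$), and, passing to double branched covers, $\Sigma_2(K)\,\#-\Sigma_2(A)$ bounds a rational homology ball while $\Sigma_2(A)$ itself bounds a negative definite four-manifold whose correction terms are sharply constrained through Donaldson's diagonalization theorem.

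First I would run the one-variable part of the argument, based on $\Upsilon$. Writing $K=\#_{i=1}^{n}T_{p_i,q_i}$ with each summand a positive or negative torus knot, additivity gives $\Upsilon_K=\sum_{i}\Upsilon_{T_{p_i,q_i}}$, and the summands are explicitly computable. For a positive torus knot $\Upsilon_{T_{p,q}}$ is concave and piecewise linear, and it is linear on the whole interval $[0,1]$ precisely when $\min(p,q)=2$; otherwise it has a genuine concave corner inside $(0,1)$. Since adding concave functions never cancels a concave corner, if all the summands of $K$ that are not of the form $T_{2m+1,2}$ (nor its mirror) have the same sign, then $\Upsilon_K$ has a corner in $(0,1)$, fails to be linear there, and hence $K$ is not concordant to any alternating knot. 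In particular the conjecture holds when $n=1$, and when $K$ is a sum of torus knots that are all positive or all negative, and more generally whenever no cancellation occurs between a summand and the mirror of another.

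The real difficulty is \emph{cancellation}. Already $T_{p,q}\,\#\,\overline{T_{p,q}}$ is slice, hence concordant to the (alternating) unknot, although neither summand is a $T_{2m+1,2}$; and subtler partial cancellations among the $\Upsilon$-functions, the $\tau$-invariants and even all Tristram--Levine signatures of several torus knots and their mirrors are possible. Ruling these out requires an essentially different invariant, and the natural candidate is the correction-term package of the double branched cover. Since $\Sigma_2(\#_{i}T_{p_i,q_i})\cong\#_{i}\Sigma_2(T_{p_i,q_i})$ is a connected sum of Seifert fibered rational homology spheres (Brieskorn manifolds $\Sigma(2,p_i,|q_i|)$, oriented according to the signs), with completely computable Heegaard Floer homology, while $\Sigma_2(A)$ bounds the negative definite manifold above, a concordance $K\sim A$ exhibits $\#_{i}\Sigma(2,p_i,|q_i|)\,\#-\Sigma_2(A)$ as the boundary of a rational homology ball; combining Donaldson's theorem with the Ozsv\'ath--Szab\'o correction-term inequalities should then force strong arithmetic restrictions on the multiset $\{(2,p_i,|q_i|)\}$, ideally collapsing every triple to $(2,p_i,2)$. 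Casson--Gordon invariants, also tightly controlled for alternating knots, offer an independent attack on the same configurations.

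The step I expect to be the main obstacle is precisely this last one. No invariant is known to detect ``concordant to some alternating knot'', so one can only assemble a list of necessary conditions and hope that it is exhaustive on sums of torus knots; and these conditions become combinatorially heavy once several summands and their mirrors interact — controlling the correction terms of arbitrary connected sums of Brieskorn manifolds, and understanding which negative definite four-manifolds they can bound, is the crux. Accordingly the realistic plan, pursued in the rest of the paper, is to put these Heegaard Floer obstructions into usable form and apply them to sums of \emph{two} torus knots, where cancellation already appears in its simplest guise; this verifies the conjecture for large families of two-summand sums, and we leave the general statement — and even the complete case $n=2$ — open.
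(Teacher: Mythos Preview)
The statement is a \emph{conjecture}, and the paper does not prove it; you correctly acknowledge this at the end. What the paper does prove are the partial results culminating in Theorem~\ref{mainresult} on sums of two torus knots, and your outline of the strategy --- trivial ``if'' direction, $\Upsilon$ for the non-cancelling cases (your concavity argument is essentially Propositions~\ref{luckycases} and~\ref{positive}), then finer Heegaard Floer obstructions for two-summand sums with cancellation --- matches the paper's in broad strokes.

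Where your plan diverges is in the choice of finer obstruction. You propose to handle all cancelling two-summand cases via correction terms of the double branched cover (and perhaps Casson--Gordon invariants). The paper instead first classifies the upsilon-alternating two-summand sums into three explicit one-parameter families (Lemma~\ref{sums}), and then uses \emph{two different} tools: the Kim--Livingston secondary upsilon invariant $\Upsilon^{(2)}$ disposes of the family $T_{qr+2,r}\#-T_{qr+1,r}$ with $r\geq 5$ odd (Theorem~\ref{uno}), while the Owens--Strle inequality on $d$-invariants of $\Sigma_2$ handles $T_{6c-1,3}\#-T_{6c-2,3}$ (Theorem~\ref{firstfamily}). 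Crucially, the paper also shows (Proposition~\ref{failure}) that $CFK^\infty$-based invariants, including $\Upsilon^{(2)}$ and its involutive refinements, \emph{cannot} distinguish the remaining family $T_{6n+2,3}\#-T_{6n+1,3}$ from a thin knot --- which is precisely why that family is left open. So the secondary upsilon invariant, absent from your proposal, is one of the paper's two workhorses in the two-summand case; conversely, the Casson--Gordon invariants you mention play no role in the paper.
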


Since both the Alexander polynomial, and the rank of the knot Floer homology groups 
are \textit{not} concordance invariants, the arguments based on these basic tools 
cannot be used to prove Conjecture \ref{mainconjecture}. It is therefore necessary to
look for concordance invariants which admit relatively simple descriptions for alternating knots.

In \cite{OSS4} Ozsv\' ath, Stipsicz and Szab\' o associate to a knot $K \subset S^3$ a continuous piecewise linear function $\Upsilon_K : [0, 2] \to \R$ only depending on the concordance type of $K$. Making use of the computation in \cite{OS8} one can prove \cite[Theorem 1.14]{OSS4}  that for an alternating knot $K$ 
\begin{equation}\label{mainobstruction}
\Upsilon_K(t)=  \frac{\sigma(K)}{2} \cdot (1- |t-1|) \ .
\end{equation} 
Based on the restriction imposed by Equation \eqref{mainobstruction}, Friedl, Livingston and Zentner \cite{FLZalternating} proved that torus knots of the form $(n+1,n)$ with $n \geq 3$ are linearly independent in $\mathcal{C}/\mathcal{C}_\text{alt}$. Their method can be adapted to prove the following.

\begin{prop}\label{luckycases}Let $K= m_1 T_{p_1,q_1}\# \dots \# m_k T_{p_k,q_k}$ be a sum of torus knots. Suppose that $p_i >q_i$, and that there is no $q_i$ coefficient appearing with repetitions in the list of coefficients $\{q_1, \dots , q_k \}$. Then $K$ is concordant to an alternating knot if and only if it is a multiple of a $(2n+1,2)$ torus knots.       
\end{prop}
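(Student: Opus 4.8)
The plan is to run the $\Upsilon$-argument of \cite{FLZalternating}, sorting the torus knot summands by their smaller index. For a knot $L$ put
$$\Delta_L(t) \;=\; \Upsilon_L(t) - \frac{\sigma(L)}{2}\bigl(1 - |t-1|\bigr),$$
the discrepancy between $\Upsilon_L$ and the value \eqref{mainobstruction} forces on an alternating knot. As $\Upsilon$ is a concordance invariant additive under connected sum \cite{OSS4}, and $\sigma$ is a concordance invariant additive under connected sum, the assignment $L \mapsto \Delta_L$ is additive under connected sum and $\Delta_{\overline L} = -\Delta_L$; moreover $\Delta_J \equiv 0$ for every alternating $J$ by \eqref{mainobstruction}. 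Hence if $K$ is concordant to an alternating knot $J$ then $\Upsilon_K = \Upsilon_J$ and $\sigma(K) = \sigma(J)$, so $\Delta_K \equiv 0$, which for $K = \#_{i}\, m_i T_{p_i, q_i}$ reads
$$\sum_{i} m_i\, \Delta_{T_{p_i,q_i}} \;\equiv\; 0 .$$
The remaining, and only, task is to deduce from the no-repetition hypothesis that $m_i = 0$ for every $i$ with $q_i \geq 3$.

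The key input is the behaviour of $\Upsilon_{T_{p,q}}$ near $t = 0$. When $q = 2$ the knot $T_{p,2}$ is alternating, so $\Delta_{T_{p,2}} \equiv 0$. When $q \geq 3$ I would invoke the computation of $\Upsilon$ for torus, more generally L-space, knots — through the staircase complex, or through the Legendre-transform description of $\Upsilon$ for L-space knots — to record: $\Upsilon_{T_{p,q}}$ is linear of slope $-\tau(T_{p,q}) = -\tfrac{(p-1)(q-1)}{2}$ on $[0, 2/q]$, and $t = 2/q$ is a corner of $\Upsilon_{T_{p,q}}$, the first one, at which the slope strictly increases. Since $2/q \in (0,1)$ when $q \geq 3$, while $\tfrac{\sigma}{2}(1-|t-1|)$ is affine on $[0,1]$, it follows that $\Delta_{T_{p,q}}$ is affine on $[0,2/q]$ and that $2/q$ is a genuine corner of $\Delta_{T_{p,q}}$, i.e.
$$\mathrm{jump}_{2/q}\bigl(\Delta_{T_{p,q}}\bigr) \;:=\; \Delta'_{T_{p,q}}\!\bigl(\tfrac{2}{q}^{+}\bigr) - \Delta'_{T_{p,q}}\!\bigl(\tfrac{2}{q}^{-}\bigr) \;\neq\; 0 .$$
(When $\sigma(T_{p,q}) = -2g(T_{p,q})$ — for instance for $T_{3,4}$ and $T_{3,5}$ — the defect $\Delta_{T_{p,q}}$ even vanishes on $[0,2/q]$; but the corner at $2/q$ persists, which is all that matters.) Establishing this local fact — locating the Maslov gradings at the top of the staircase of $T_{p,q}$ precisely enough to pin the first corner at $2/q$ and to exclude an earlier one — is the step I expect to demand the most care.

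Granting it, the proof concludes in one move. The functional $f \mapsto \mathrm{jump}_{t_0}(f) := f'(t_0^+) - f'(t_0^-)$ is additive on piecewise-linear functions. Suppose, for contradiction, that $S := \{\,i : q_i \geq 3,\ m_i \neq 0\,\} \neq \varnothing$, and let $q^* = \max\{q_i : i \in S\}$; by the hypothesis that the $q_i$ are pairwise distinct, $q^*$ is realised by a unique index $i^* \in S$, and $t_0 := 2/q^* \in (0,1)$. For $i$ with $q_i = 2$ one has $\Delta_{T_{p_i,q_i}} \equiv 0$; for $i \neq i^*$ with $q_i \geq 3$ one has $q_i < q^*$, so $t_0 < 2/q_i$ and $\Delta_{T_{p_i,q_i}}$ is affine in a neighbourhood of $t_0$. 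Therefore, applying $\mathrm{jump}_{t_0}$ to $\sum_i m_i \Delta_{T_{p_i,q_i}} \equiv 0$ leaves only the $i^*$-term, giving
$$0 \;=\; m_{i^*}\cdot \mathrm{jump}_{2/q^*}\bigl(\Delta_{T_{p_{i^*},q^*}}\bigr) ,$$
whence $m_{i^*} = 0$, contradicting $i^* \in S$. So $m_i = 0$ whenever $q_i \geq 3$, and $K = \#_{\,i:\,q_i = 2} m_i T_{p_i, 2}$; since at most one $q_i$ equals $2$, $K$ is, up to mirroring, a multiple of a single $(2n+1, 2)$ torus knot, which proves the ``only if'' direction. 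The ``if'' direction is clear: $T_{2n+1,2}$ and its mirror are alternating, and connected sums of alternating knots are alternating, so any such multiple is itself alternating.
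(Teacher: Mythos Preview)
Your proposal is correct and follows essentially the same route as the paper: both use that for $q\ge 3$ the upsilon function of $T_{p,q}$ has its first singularity at $t=2/q<1$, while the upsilon function of an alternating knot is smooth away from $t=1$, so under the no-repetition hypothesis the singularity coming from the summand with largest $q_i$ survives uncancelled. The paper works directly with $\Upsilon$ rather than your discrepancy $\Delta$, and extracts the location of the first singularity from the Feller--Krcatovich recursion (Proposition~\ref{orsobalosso}) rather than from the staircase description, but these are cosmetic differences.
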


Using the obstruction of Equation \eqref{mainobstruction} one can further see that the following holds.

\begin{prop} \label{positive}
A  sum of positive torus knots is concordant to an alternating knot if and only if it is a sum of $(2n+1,2)$ torus knots. 
\end{prop}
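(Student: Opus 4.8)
The plan is to feed the obstruction of Equation~\eqref{mainobstruction} into the additive structure of the concordance group. The inputs I would use are: additivity of $\Upsilon$, of $\tau$ and of $\sigma$ under connected sum; the facts, proved in \cite{OSS4}, that $\Upsilon_K'(0^+)=-\tau(K)$ and that $\Upsilon_L(t)\ge -g_4(L)\,(1-|t-1|)$ for every knot $L$; and the classical facts that a nontrivial positive torus knot $T_{p,q}$ has $\sigma(T_{p,q})<0$, $|\sigma(T_{p,q})|\le 2g_4(T_{p,q})$ and $\tau(T_{p,q})=g_4(T_{p,q})=(p-1)(q-1)/2$. The ``if'' direction is immediate: a sum of torus knots $T_{2n+1,2}$ is a connected sum of alternating knots, a connected sum of alternating knots is again alternating, and hence is concordant to an alternating knot (namely itself).

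For the converse, write $K=T_{p_1,q_1}\#\cdots\#T_{p_N,q_N}$ with each summand a nontrivial positive torus knot, say $p_i>q_i\ge 2$ (discarding unknotted summands), and suppose $K$ is concordant to an alternating knot $A$. Since $\Upsilon$ and $\sigma$ are concordance invariants, Equation~\eqref{mainobstruction} forces $\Upsilon_K(t)=\tfrac{\sigma(K)}{2}(1-|t-1|)$. Differentiating at $t=0^+$ and using $\Upsilon_K'(0^+)=-\tau(K)$ gives $\tau(K)=-\sigma(K)/2$. By additivity, $\sum_i\bigl(g_4(T_{p_i,q_i})+\tfrac12\sigma(T_{p_i,q_i})\bigr)=\tau(K)+\tfrac12\sigma(K)=0$, and each summand is non-negative because $\sigma(T_{p_i,q_i})<0$ and $|\sigma(T_{p_i,q_i})|\le 2g_4(T_{p_i,q_i})$. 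Hence every summand vanishes, that is
\[
\sigma(T_{p_i,q_i})=-2g_4(T_{p_i,q_i})=-(p_i-1)(q_i-1)\qquad\text{for all }i .
\]

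Now I would apply the $\Upsilon$ slice-genus bound to each summand, $\Upsilon_{T_{p_i,q_i}}(t)\ge -g_4(T_{p_i,q_i})\,(1-|t-1|)$, and add them up:
\[
\Upsilon_K(t)=\sum_i\Upsilon_{T_{p_i,q_i}}(t)\;\ge\;-\Bigl(\textstyle\sum_i g_4(T_{p_i,q_i})\Bigr)(1-|t-1|)=\tfrac{\sigma(K)}{2}(1-|t-1|),
\]
where the last equality is the previous display summed over $i$. By the first line of this argument the two ends of the chain agree, so equality holds termwise, and therefore $\Upsilon_{T_{p_i,q_i}}(t)=-g_4(T_{p_i,q_i})\,(1-|t-1|)$ for every $i$ and every $t\in[0,2]$; that is, each $\Upsilon_{T_{p_i,q_i}}$ is the ``maximal tent function'' of its genus.

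It remains to observe that a positive torus knot whose $\Upsilon$ is the maximal tent function must be a $(2n+1,2)$ torus knot. This follows from the computation of $\Upsilon_{T_{p,q}}$ in \cite{OSS4}: when $q\ge 3$ the function $\Upsilon_{T_{p,q}}$ has a corner inside $(0,1)$ and so is not linear there; for instance $\Upsilon_{T_{3,4}}(1)=-2\ne-3$. (Alternatively, the second paragraph already shows that the only summands surviving to this point are $T_{2n+1,2}$ together with $T_{3,4}$ and $T_{3,5}$, the last two of which are ruled out by computing $\Upsilon_{T_{3,4}}$ and $\Upsilon_{T_{3,5}}$ directly.) Thus $q_i=2$, hence $p_i$ is odd, for every $i$, so $K$ is a sum of $(2n+1,2)$ torus knots. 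I expect the last step to be the main obstacle: it requires a sufficiently precise hold on $\Upsilon_{T_{p,q}}$ (or on the finite list of maximal-signature torus knots). One must also be careful with the sign conventions for $\sigma$ and $\tau$, and with the fact that the naive termwise comparison of $\Upsilon_{T_{p,q}}$ with $\tfrac{\sigma(T_{p,q})}{2}(1-|t-1|)$ has no definite sign---it fails already for $T_{3,7}$ near $t=0$---which is precisely why the signature reduction of the second paragraph has to precede the use of the slice-genus bound.
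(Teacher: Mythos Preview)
Your argument is correct, but it follows a genuinely different route from the paper's.

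The paper's proof relies entirely on Proposition~\ref{orsobalosso} (the Feller--Krcatovich/Bodn\'ar--N\'emethi recursion): it writes $\Upsilon_K(t)=\sum_{i\ge 2} m_i\,\Upsilon_{i+1,i}(t)$ with all $m_i\ge 0$, and then uses the linear independence of the $\Upsilon_{i+1,i}$'s (each having its first singularity at $t=2/i$) to conclude that $\Upsilon_K$ is a multiple of $1-|t-1|$ only if $m_i=0$ for $i>2$. No signature, no slice-genus bound, and not even the precise coefficient $\sigma(K)/2$ in Equation~\eqref{mainobstruction} are used---only the \emph{shape} of $\Upsilon_K$ matters.

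Your route instead squeezes each summand: from concordance to an alternating knot you get $\Upsilon_K=\tfrac{\sigma(K)}{2}(1-|t-1|)$, identify the slope at $0^+$ with $-\tau(K)=-\sum_i g_4(T_{p_i,q_i})$, and then use the slice-genus bound $\Upsilon_{T_{p_i,q_i}}(t)\ge -g_4(T_{p_i,q_i})(1-|t-1|)$ termwise to force each $\Upsilon_{T_{p_i,q_i}}$ to be the maximal tent. The final step---that only $q=2$ torus knots have tent-shaped $\Upsilon$---is exactly the ``first singularity at $t=2/q$'' fact the paper records after Proposition~\ref{orsobalosso}, so both proofs ultimately invoke the same structural input about $\Upsilon_{T_{p,q}}$. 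One simplification: your signature paragraph is not actually needed. Once you know $\Upsilon_K'(0^+)=-\tau(K)$ and $\tau(T_{p_i,q_i})=g_4(T_{p_i,q_i})$, you already have $\Upsilon_K(t)=-\big(\sum_i g_4(T_{p_i,q_i})\big)(1-|t-1|)$, and the squeeze goes through directly; the termwise identity $\sigma(T_{p_i,q_i})=-2g_4(T_{p_i,q_i})$ is a consequence rather than a prerequisite. What your approach buys is that it avoids the full Euclidean-algorithm decomposition; what the paper's approach buys is that it is shorter, avoids $\sigma$ and the slice-genus bound altogether, and makes transparent why positivity of the sum is the crucial hypothesis (it is exactly what makes the $m_i$ non-negative).
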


Note that Proposition \ref{luckycases} and \ref{positive} hold more generally for algebraic knots.

It follows from Proposition \ref{positive} that in order to prove 
Conjecture \ref{mainconjecture} one has to deal with connected sums where both positive 
and negative torus knots occur. Going in this direction, as a by-product of his connected sum formula
for knots in involutive knot Floer homology, Zemke \cite{zemke} proved that the 
knot $T_{ 6,5} \# -T_{ 4, 3} \# -T_{ 5,4}$ is not concordant to a thin knot, and 
hence to an alternating knot. 
In \cite{allen}, using the Kim-Livingston secondary upsilon invariant \cite{KimLiv}, 
Allen proved that a knot of the form $K=T_{p+2,p}\# - T_{p+1,p}$, $p \geq 5$ odd, 
is not concordant to a thin knot.
In order to find further evidence for Conjecture \ref{mainconjecture} 
we study sums of two torus knots. Here is our main result.

\begin{thm}\label{mainresult} If a non-alternating sum of two torus knots is concordant to an 
alternating knot then it is of the form $T_{6n+2,3}\#-T_{6n+1,3}$ with $n \geq 1$. 
\end{thm}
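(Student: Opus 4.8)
The plan is to obstruct in two stages: first the upsilon function of Ozsv\'ath--Stipsicz--Szab\'o, together with the recursive structure of the upsilon invariant of torus knots, reduces the problem to a single one-parameter family of candidates, and then a finer Heegaard Floer invariant eliminates all of these but the family in the statement. For the reduction, recall that by Proposition \ref{positive} a sum of torus knots all of the same sign that is concordant to an alternating knot is already a sum of $(2n+1,2)$ torus knots, hence alternating; so a non-alternating sum of two torus knots concordant to an alternating knot must have one positive and one negative summand, and since the mirror of an alternating knot is alternating we may write $K=T_{p,q}\#-T_{r,s}$ with $p>q\geq 2$, $r>s\geq 2$, and the two summands distinct (otherwise $K$ is slice). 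If $q\neq s$ then Proposition \ref{luckycases} forces $K$ to be a multiple of a $(2n+1,2)$ torus knot, which, comparing prime decompositions under connected sum, cannot happen for a non-alternating mixed-sign sum of two torus knots. Hence $q=s$, and as $q=s=2$ would make $K$ alternating we are left with $K=T_{p,q}\#-T_{r,q}$, $p\neq r$; replacing $K$ by $-K$ if necessary, $p>r>q\geq 3$.

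Since $\Upsilon$ is additive and a concordance invariant and $\sigma$ is a concordance invariant, if $K$ is concordant to an alternating knot then \eqref{mainobstruction} gives
\[
\Upsilon_{T_{p,q}}(t)-\Upsilon_{T_{r,q}}(t)=\tfrac{\sigma(K)}{2}\bigl(1-|t-1|\bigr),
\]
whose right-hand side is linear on $[0,1]$. I would feed this into two facts about the upsilon invariant of torus knots: first, $\Upsilon_{T_{a,b}}$ is piecewise linear with non-decreasing slopes on $[0,1]$, with $\Upsilon'_{T_{a,b}}(0^+)=-g(T_{a,b})$ and first slope change exactly at $t=2/\min(a,b)$ --- so a torus knot has $\Upsilon$ equal to a single tent $c\,(1-|t-1|)$ if and only if it is an alternating $T_{2,\mathrm{odd}}$; second, the Feller--Krcatovich recursion $\Upsilon_{T_{q,a}}=\Upsilon_{T_{q,a-q}}+\Upsilon_{T_{q,q+1}}$ for $a>q$, which iterates to $\Upsilon_{T_{p,q}}=\Upsilon_{T_{j,q}}+m\,\Upsilon_{T_{q,q+1}}$ whenever $p=qm+j$ with $0\leq j<q$ (with $T_{1,q}$ the unknot).

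Now I would run the breakpoint bookkeeping. Writing $p=qm_p+j_p$, $r=qm_r+j_r$ and substituting the recursion, on $(0,2/q]$ the only contribution to a slope change of $\Upsilon_{T_{p,q}}-\Upsilon_{T_{r,q}}$ comes from $(m_p-m_r)\Upsilon_{T_{q,q+1}}$, at $t=2/q$; since the tent on the right has no breakpoint in $(0,1)$ and $2/q<1$, this forces $m_p=m_r=:m$. Then $\Upsilon_{T_{j_p,q}}-\Upsilon_{T_{j_r,q}}$ is itself a tent, and because $j_p>j_r$ the term $\Upsilon_{T_{j_r,q}}$ is linear on $(0,2/j_p)$, so a slope change of $\Upsilon_{T_{j_p,q}}$ at $2/j_p$ would survive in the difference; hence $2/j_p\geq 1$, i.e. $j_p=2$, and therefore $j_r=1$, while $\gcd(j_p,q)=1$ makes $q$ odd. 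Thus $K=T_{qm+2,q}\#-T_{qm+1,q}$ for some odd $q\geq 3$ and $m\geq 1$, with $\sigma(K)=1-q$ and $\Upsilon_K=\Upsilon_{T_{2,q}}$; in particular $\sigma$, $\tau$ and $\Upsilon$ cannot tell $K$ from the alternating torus knot $T_{2,q}$.

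The hard part is the last step: showing that of these candidates only $q=3$ with $m$ even --- that is, $m=2n$ and $(p,r)=(6n+2,6n+1)$, $n\geq 1$ --- can be concordant to an alternating knot. This needs an obstruction finer than $\Upsilon$: the Kim--Livingston secondary upsilon invariant \cite{KimLiv}, in the spirit of Allen \cite{allen} (whose argument already handles $q\geq 5$, $m=1$), or the involutive knot Floer invariants together with Zemke's connected sum formula \cite{zemke}, or the $d$-invariants of the Brieskorn spheres double-branch-covering the two summands. Concretely one would assemble the full knot Floer complex of $T_{qm+2,q}\#-T_{qm+1,q}$ from the two torus-knot staircases, evaluate the chosen secondary invariant, and verify that it is nontrivial --- hence obstructs concordance to a thin, a fortiori to an alternating, knot --- for every admissible $(q,m)$ other than $q=3$ with $m$ even. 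I expect this computation, and especially pinning down the exact threshold in $(q,m)$ at which the secondary invariant switches on, to be the main obstacle; the upsilon reduction above is comparatively soft.
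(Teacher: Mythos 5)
Your reduction to the one--parameter family $T_{qm+2,q}\#-T_{qm+1,q}$ ($q\geq 3$ odd, $m\geq 1$) is essentially the paper's Proposition \ref{upsilon}/Lemma \ref{sums} and the breakpoint bookkeeping is sound, but you stop short of the full content of \eqref{mainobstruction}: being concordant to an alternating knot forces $\Upsilon_K$ to equal $\tfrac{\sigma(K)}{2}(1-|t-1|)$, not merely to be \emph{some} multiple of the tent, so one must actually compute $\sigma(T_{qm+2,q})-\sigma(T_{qm+1,q})$ and compare it with $-2\tau(K)=1-q$. Your assertion that $\sigma(K)=1-q$ for every candidate is false: the signature recursion shows it fails precisely when $q\geq 5$ and $m$ is odd, so those knots are already excluded at the upsilon/signature stage. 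Since this only enlarges your candidate list, it is not fatal, but it is an error and it discards the cheapest available obstruction for part of the family.

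The genuine gap is the elimination step, which you defer entirely and for which your proposed toolbox provably cannot finish the job. For $q\geq 5$ the Kim--Livingston secondary invariant does work (Theorem \ref{uno}: a staircase computation gives $\Upsilon^{(2)}_{K,4/r}(4/r)$ equal to $-4(r-3)/r$ for $T_{qr+2,r}$ but $-4(r-2)/r$ for $T_{qr+1,r}$). For $q=3$, however, Proposition \ref{failure} shows that $CFK^\infty(T_{3m+2,3})$ becomes filtered chain homotopy equivalent to $CFK^\infty(T_{3m+1,3})\otimes CFK^\infty(T_{3,2})$ after adding an acyclic summand, and the same holds involutively; consequently \emph{no} invariant of the stable (involutive) knot Floer complex --- in particular neither $\Upsilon^{(2)}$ nor Zemke's involutive connected-sum formula --- can distinguish $T_{3m+2,3}\#-T_{3m+1,3}$ from the thin knot $T_{3,2}$, for either parity of $m$. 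This is exactly why you have no mechanism for separating $m$ odd from $m$ even at $q=3$. The paper handles the $m$ odd subfamily by leaving the knot Floer complex altogether: a hypothetical concordance to an alternating knot produces, via pushed-in checkerboard surfaces, definite fillings of $\Sigma_2(K)=\Sigma(2,3,6c-1)\#-\Sigma(2,3,6c-2)$, to which the Owens--Strle theorem applies (Lemma \ref{criterion}); a plumbing-graph computation of the correction terms of $\Sigma(2,3,6c-2)$, uniform in $c$, then contradicts the resulting inequality (Theorem \ref{firstfamily}). Without this, or some other input not factoring through the stable knot Floer complex, your argument cannot reach the stated conclusion.
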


We do not know if the knots in the family $T_{6n+2,3}\# -T_{6n+1,3}$ are concordant to alternating knots. However, 
eventhough their topological significance is unclear these knots may serve as useful test cases for the implementation of further obstructions.

We now summarize the crucial steps leading to Theorem \ref{mainresult}. A knot $K \subset S^3$ whose upsilon function 
satisfies Equation \eqref{mainobstruction} is said \textit{upsilon-alternating}. In Section \ref{sectionone} we give a complete characterization of upsilon-alternating sums of two torus knots.

\begin{lem}\label{sums}
 Let $K$ be a non-slice, non-alternating, connected sum of two torus knots. 
 If $K$ is upsilon-alternating then one of the following holds:
\begin{enumerate}
\item $K=T_{2mr+2,r} \# -T_{2mr+1,r}$ with $m\geq 1$,  and $r\geq 5$odd,
\item $K=T_{6c-1,3} \# -T_{6c-2,3}$ with $c\geq 1$,
\item $K=T_{6c+2,3} \# -T_{6c+1,3}$ with $c\geq 1$.
\end{enumerate}
\end{lem}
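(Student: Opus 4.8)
The plan is to exploit the explicit piecewise-linear formula for the upsilon function of a torus knot, together with additivity of $\Upsilon$ under connected sum, to turn the condition ``$\Upsilon_K(t) = \tfrac{\sigma(K)}{2}(1-|t-1|)$'' into a rigid system of constraints on the parameters $(p_i,q_i)$. First I would recall that $\Upsilon_{T_{p,q}}$ is computed from the Alexander polynomial (equivalently, from the semigroup of the singularity, or the staircase complex) and is a concave piecewise-linear function on $[0,1]$, symmetric about $t=1$, with $\Upsilon_{T_{p,q}}(0)=0$, initial slope $-g(T_{p,q}) = -(p-1)(q-1)/2$, and finitely many breakpoints at rational points whose denominators divide $q$ (say). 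Writing $K = m_1 T_{p_1,q_1}\#\cdots\#m_k T_{p_k,q_k}$ with $p_i>q_i$, by additivity $\Upsilon_K = \sum_i m_i\,\Upsilon_{T_{p_i,q_i}}$, and the target function $\tfrac{\sigma}{2}(1-|t-1|)$ is linear on $[0,1]$ with slope $\sigma/2 = \sum_i m_i \sigma(T_{p_i,q_i})/2$. So the combination $\sum_i m_i \Upsilon_{T_{p_i,q_i}}$ must be linear on $[0,1]$: all the ``corners'' contributed by the individual torus-knot upsilon functions have to cancel.

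The key mechanism is therefore a cancellation-of-breakpoints argument. Each $\Upsilon_{T_{p_i,q_i}}$ is \emph{strictly concave} at each of its breakpoints (the slope strictly decreases), so a positive multiple ($m_i>0$) can only be cancelled at a breakpoint $t_0$ by a negative multiple ($m_j<0$) of some $\Upsilon_{T_{p_j,q_j}}$ that has a matching breakpoint at the same $t_0$ with exactly the opposite change of slope. I would first use the reduction already available from Proposition~\ref{positive} (no purely positive sum works) and from the fact that $\Upsilon$ vanishes on $(2n+1,2)$-torus knots after subtracting $\tfrac{\sigma}{2}(1-|t-1|)$ to assume we are genuinely looking at a sum of two torus knots $m_1 T_{p_1,q_1}\# m_2 T_{p_2,q_2}$ with $m_1>0$, $m_2<0$ (up to mirroring and after discarding $(2n+1,2)$ summands). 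Then I would compare the \emph{first} breakpoints (nearest $t=0$) of $\Upsilon_{T_{p_1,q_1}}$ and $\Upsilon_{T_{p_2,q_2}}$: matching the location of the first corner, and then the slope on the first linear piece, and then propagating this forced equality breakpoint-by-breakpoint across $[0,1]$, should force $|m_1|=|m_2|=1$ and force a very tight relation between $(p_1,q_1)$ and $(p_2,q_2)$ — essentially that they share the same $q$ (so $q_1=q_2=r$) and that $p_1,p_2$ are consecutive multiples-plus-residues dictated by the semigroup combinatorics. Carrying this out is cleanest via the formula $\Upsilon_{T_{p,q}}(t)$ in terms of the torus-knot semigroup $S_{p,q}$: the breakpoints of $\Upsilon_{T_{p,q}}$ occur at $t = 2j/q$ for the ``gaps'' $j$, and the successive slopes are governed by how many semigroup elements lie below a moving line. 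Equating two such staircase profiles up to sign forces $r$ odd (else the value at $t=1$ or the parity of the jump structure is wrong, which is where the $r\geq 5$ odd versus $q=3$ dichotomy appears) and pins $p_1,p_2$ to the three listed families.

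Concretely, the steps in order: (1) set up $\Upsilon_K = \sum m_i \Upsilon_{T_{p_i,q_i}}$ and record that upsilon-alternating means this equals an explicitly linear function on $[0,1]$; (2) record the semigroup formula for $\Upsilon_{T_{p,q}}$, its breakpoint set, its slopes, its strict concavity at breakpoints, and its values $\Upsilon_{T_{p,q}}(1) = $ (a known quantity related to $\sigma$ and the $4$-genus); (3) argue that a two-term combination that is linear on $[0,1]$ must have the form $T_{p_1,q_1}\# -T_{p_2,q_2}$ (coefficients $\pm1$) with the two torus knots having ``interleaving but ultimately coincident'' breakpoint data — this is the cancellation argument and uses strict concavity crucially; (4) translate ``coincident breakpoint/slope data'' into number-theoretic equations on $(p_1,q_1,p_2,q_2)$ and solve them, getting $q_1=q_2=r$, $p_1 = p_2+1$, and then a congruence condition on $p_2 \bmod r$ that, combined with $\sigma$ matching and $r$ being odd, yields exactly families (1)–(3) (the $q=3$ cases (2) and (3) being the small-$r$ boundary phenomena not covered by the generic family (1)); (5) check that the slice/alternating/degenerate cases have been correctly excluded so the hypotheses ``non-slice, non-alternating'' are exactly what removes the trivial solutions. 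The main obstacle I anticipate is step (3)–(4): making the cancellation argument genuinely rigorous rather than heuristic — i.e., proving that no two \emph{distinct} torus knots can have partially-overlapping-but-not-identical $\Upsilon$ staircases that nonetheless sum (with a multiplicity) to something linear, which requires a careful induction on breakpoints from $t=0$ outward and a clean description of exactly which breakpoint a change of slope of a given size can live at. I would handle this by working entirely with the integer ``staircase'' data of $\Upsilon_{T_{p,q}}$ (the gap sequence of the semigroup), reducing everything to a finite combinatorial matching problem, and then solving the resulting Diophantine conditions.
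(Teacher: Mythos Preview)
Your overall strategy---reduce ``upsilon-alternating'' to the vanishing of all non-trivial corners of $\Upsilon_K$ on $[0,1]$, then match corners coming from the two torus-knot summands, and finally impose the signature condition---is correct and would lead to the three families. However, your route differs from the paper's in one essential organisational point, and there is one place where your proposal is confused about the hypotheses.

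The paper does not analyse the semigroup/staircase description of $\Upsilon_{T_{p,q}}$ directly. Instead it invokes the Feller--Krcatovich / Bodn\'ar--N\'emethi recursion (Proposition~\ref{orsobalosso}), which expresses each $\Upsilon_{T_{p,q}}$ as a \emph{non-negative integer combination} of the functions $\Upsilon_{i+1,i}$, with coefficients given by the quotients in the Euclidean algorithm for $(p,q)$. Since the $\Upsilon_{i+1,i}$ are linearly independent in $C^0_{PL}[0,2]$ (each has its first singularity at $t=2/i$), the condition ``$\Upsilon_K$ is a multiple of $\Upsilon_{3,2}(t)=1-|t-1|$'' becomes a system of equalities between Euclidean-algorithm data of $(m,n)$ and $(m',n)$. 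This replaces your anticipated ``careful induction on breakpoints from $t=0$ outward'' by a finite linear-algebra problem in a good basis, and yields the three shapes $T_{cbr+1,r}\#-T_{car+1,r}$, $T_{cbr+2,r}\#-T_{car+2,r}$, $T_{cbr+2,r}\#-T_{car+1,r}$ in a few lines. After that, the paper checks the remaining condition $\tau(K)=-\sigma(K)/2$ by a separate (and non-trivial) recursive computation of torus-knot signatures; specialising to $a=b=1$ and discarding slice and alternating cases then gives exactly the three families of the lemma. Your plan treats the signature step as a brief ``$\sigma$ matching'' check, but in practice it carries real content: it is what forces $c$ even when $r\geq 5$ and produces the odd/even-$c$ split when $r=3$.

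Finally, note that your step~(3), ``argue that the coefficients must be $\pm 1$'', is unnecessary: the hypothesis is that $K$ is a connected sum of two torus knots, so $m_1,m_2\in\{\pm1\}$ from the outset. The paper actually proves the stronger Proposition~\ref{upsilon} for arbitrary integer coefficients and then specialises; your proposal conflates the two statements.
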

Recall that, since torus knots are linearly independent in the concordance group, 
the non-sliceness assumption simply means that the connected sum is algebraically 
nontrivial i.e. it is not of the form $T_{p,q}\# -T_{p,q}$.

In Section \ref{sectiontwo} using a connected sum formula for the 
Kim-Livingston secondary invariant \cite{alfieri1} we prove the following theorem,
which deals with the first family in Lemma \ref{sums}.

\begin{thm}\label{uno}
 For $q \geq 1$, and $r\geq 5$ odd, $K=T_{qr+2,r} \# -T_{qr+1,r}$ is not concordant to a Floer thin knot. In particular, a knot of this form is not concordant to an alternating knot.
\end{thm}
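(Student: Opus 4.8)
The plan is to compute the Kim--Livingston secondary invariant $\Upsilon^{(2)}$ of the knots $K_{q,r}=T_{qr+2,r}\#-T_{qr+1,r}$ at a carefully chosen parameter, and to exploit the fact that for a Floer thin knot this secondary invariant is pinned down (once the primary $\Upsilon$ is linear as in \eqref{mainobstruction}) to a value that $K_{q,r}$ will be seen to violate. First I would recall from \cite{alfieri1} the connected sum formula for the secondary invariant, which expresses $\Upsilon^{(2)}_{K_1\#K_2}$ in terms of $\Upsilon^{(2)}_{K_1}$, $\Upsilon^{(2)}_{K_2}$, and the primary functions $\Upsilon_{K_1}$, $\Upsilon_{K_2}$; since $\Upsilon^{(2)}$ of a mirror is obtained by the appropriate sign/reflection rule, this reduces the computation to the individual torus knots $T_{qr+2,r}$ and $T_{qr+1,r}$. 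For torus knots these invariants are computable from the semigroup/staircase description of $CFK^\infty$, so $\Upsilon^{(2)}_{K_{q,r}}$ becomes an explicit (if messy) function of $q$ and $r$.

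Next I would record the constraint that being Floer thin imposes. A Floer thin knot has $CFK^\infty$ determined by its Alexander polynomial and signature, and $\Upsilon$ is forced to be $\tfrac{\sigma}{2}(1-|t-1|)$; the secondary invariant $\Upsilon^{(2)}$ of such a knot, evaluated at any $t$ where $\Upsilon$ is smooth, is likewise determined — it takes the "model" value coming from the standard thin staircase with the same $\sigma$ and $t$. So the strategy is: (i) check that $K_{q,r}$ is upsilon-alternating, i.e. its $\Upsilon$ already matches \eqref{mainobstruction} — this is exactly why these knots survived Lemma \ref{sums}, so it costs nothing; (ii) pick a good test value $t_0\in(0,2)$, presumably a small rational like $t_0=2/r$ or thereabouts where the two torus-knot contributions do not cancel; (iii) compute $\Upsilon^{(2)}_{K_{q,r}}(t_0)$ and compare with the thin model value at $t_0$ for a knot of the same signature; (iv) conclude they differ. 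Since concordance invariance of $\Upsilon^{(2)}$ (up to the known indeterminacy) is part of its construction, a mismatch rules out concordance to any Floer thin knot, and alternating knots are Floer thin by \cite{OS8}, giving the last sentence.

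The main obstacle will be step (iii) together with the bookkeeping in the connected sum formula: the secondary invariant is only well-defined up to a controlled indeterminacy (it depends on a choice, or is really an interval/equivalence class), so I must make sure the comparison in (iv) is robust against that indeterminacy — i.e. the gap between the computed value for $K_{q,r}$ and the thin model is strictly larger than the slack in the invariant. This forces a careful choice of $t_0$ and of $q,r$ ranges, and is where the hypotheses $q\geq 1$ and $r\geq 5$ odd will really be used (for $r=3$ the gap presumably closes, consistent with families (2) and (3) of Lemma \ref{sums} remaining open). A secondary technical point is handling the $q\geq 1$ uniformity: I expect the torus-knot staircases for $T_{qr+2,r}$ and $T_{qr+1,r}$ to have enough structural similarity that the relevant portion of $\Upsilon^{(2)}$ near $t_0$ is independent of $q$, or monotone in $q$, so that a single estimate covers all $q$ at once. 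Once these estimates are in place the argument closes as outlined above.
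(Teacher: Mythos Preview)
Your overall strategy---use the Kim--Livingston secondary invariant and compute on torus-knot staircases---matches the paper, but there is a genuine gap in how you plan to deploy it. You propose to compute $\Upsilon^{(2)}_{K_{q,r},t_0}(s)$ for the connected sum $K_{q,r}=T_{qr+2,r}\#-T_{qr+1,r}$ itself and compare to a ``thin model value.'' This cannot work: by Lemma~\ref{sums} the knot $K_{q,r}$ is upsilon-alternating, so $\Upsilon_{K_{q,r}}$ has its only singularity at $t=1$. Hence for every $t_0\neq 1$ the sets $\mathcal Z^+$ and $\mathcal Z^-$ coincide and $\Upsilon^{(2)}_{K_{q,r},t_0}(s)=-\infty$, exactly as for a thin knot. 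There is nothing to compare. (Relatedly, there is no general additive connected-sum formula of the shape you describe, nor a simple mirror rule: $\Upsilon^{(2)}$ is not a homomorphism.)

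The paper's key maneuver is to \emph{rearrange} the putative concordance as $T_{qr+2,r}\sim T_{qr+1,r}\# J$ with $J$ thin, and then evaluate $\Upsilon^{(2)}$ of each side at $t=4/r$---a genuine singularity of $\Upsilon$ for both individual torus knots. At that parameter the thin summand $J$ has smooth $\Upsilon$, and the result from \cite{alfieri1} being invoked is precisely that in this situation $\Upsilon^{(2)}_{T_{qr+1,r}\# J,\,4/r}=\Upsilon^{(2)}_{T_{qr+1,r},\,4/r}$. One is then left comparing the secondary invariants of the two single torus knots, each computed from its own staircase; the answers are $-4(r-2)/r$ and $-4(r-3)/r$, which differ for all $r$. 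Note also that the correct test value is $t=4/r$, the \emph{second} breakpoint: at $t=2/r$ the first step of both staircases is $(1,r-1)$ and the secondary invariants agree, so your suggested $t_0=2/r$ would see no difference. The hypothesis $r\geq 5$ enters because one needs $4/r<1$ to keep the thin knot's singularity away from the test point.
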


The methods of Proposition \ref{uno} cannot be used to prove that the knots in the other families of Lemma \ref{sums} are not concordant to alternating knots. In fact, we have the following proposition.

\begin{prop}\label{failure}For $q \geq 1$ one can find an acyclic, $\Z$-graded, $(\Z \oplus \Z)$-filtered chain complex $A_*$ and a filtered chain homotopy equivalence 
\[CFK^\infty(T_{3q+2,3}) \oplus A_* \simeq CFK^\infty(T_{3q+1,3}) \otimes CFK^\infty(T_{3,2}) \ . \]
The same holds if we substitute $CFK^\infty$ and the tensor product with their involutive counterparts $CFKI^\infty$ \cite{involutive1,zemke}. 
\end{prop}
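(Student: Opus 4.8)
The plan is to work with the staircase model of $CFK^\infty$ for $L$-space knots. Since torus knots are $L$-space knots, $CFK^\infty(T_{3q+1,3})$, $CFK^\infty(T_{3q+2,3})$ and $CFK^\infty(T_{3,2})$ are filtered chain homotopy equivalent to staircase complexes, which can be written down explicitly from the gap sequences of the numerical semigroups $\langle 3,3q+1\rangle$, $\langle 3,3q+2\rangle$ and $\langle 2,3\rangle$ (Ozsv\'ath--Szab\'o). First I would record these three staircases; in particular $CFK^\infty(T_{3,2})$ is the one--step staircase on generators $a,b,c$ with $\partial b=c+Ua$, and a direct inspection of the two semigroups shows that the step sequence of $T_{3q+2,3}$ is obtained from that of $T_{3q+1,3}$ by one $q$--independent local modification (consistently with the fact that the genus increases by exactly $1$, matching the genus of the trefoil).

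Next I would form and simplify the tensor product $CFK^\infty(T_{3q+1,3})\otimes CFK^\infty(T_{3,2})$. Writing $S=CFK^\infty(T_{3q+1,3})$, this complex consists of three copies $S\otimes a$, $S\otimes b$, $S\otimes c$ of $S$ glued by the differential $\partial(s\otimes b)=s\otimes c+(Us)\otimes a$; since $S$ and the trefoil staircase are reduced, so is this complex, so no differential can be cancelled and the reduction has to proceed through a filtered change of basis. The goal is to exhibit
\[
CFK^\infty(T_{3q+1,3})\otimes CFK^\infty(T_{3,2})\;\simeq\;CFK^\infty(T_{3q+2,3})\oplus A_*\ ,
\]
with $A_*$ a direct sum of acyclic ``box'' summands; boxes are acyclic over $\Laur$, so this produces the desired $A_*$. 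I would establish the splitting by treating the base case $q=1$ by hand --- here the $(\Z\oplus\Z)$--graded Euler characteristic already constrains things sharply, since $\Delta_{T_{4,3}}(t)\Delta_{T_{3,2}}(t)-\Delta_{T_{5,3}}(t)$ is a sum of two signed monomial multiples of $(1-t)^2$, each being the Euler characteristic of a single box --- and then running an induction on $q$: increasing $q$ by one enlarges each of the three staircases by a fixed block of steps, so the new part of the tensor product can be matched against the new steps of the staircase of $T_{3q+2,3}$ together with a fixed number of additional boxes.

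For the involutive statement one must in addition match the conjugation involutions: the right--hand side carries the involution $\iota_{T_{3q+1,3}}\otimes\iota_{T_{3,2}}$, which is the meaning of the involutive tensor product \cite{involutive1,zemke}. Transporting this through the equivalence above produces an involution $\iota'$ on $CFK^\infty(T_{3q+2,3})\oplus A_*$, and it remains to split $\iota'$ along this direct sum. Since $A_*$ is acyclic, $\iota'$ acts on $H_*\cong\Laur$ only through its staircase block, and a grading--preserving $\Laur$--module automorphism of $\Laur$ is the identity; hence the staircase block of $\iota'$ is a filtered chain homotopy equivalence of $CFK^\infty(T_{3q+2,3})$ inducing the identity on homology, and by the rigidity of the conjugation involution for $L$-space knots it is filtered chain homotopic to $\iota_{T_{3q+2,3}}$. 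As this block is invertible up to homotopy, a filtered chain homotopy equivalence with identity diagonal blocks conjugates $\iota'$, up to homotopy, into a block--diagonal map $\iota_{T_{3q+2,3}}\oplus\iota_{A_*}$ for some involution $\iota_{A_*}$ on $A_*$; this upgrades the equivalence to $CFKI^\infty$.

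The step I expect to be the main obstacle is the simplification of the tensor product, carried out uniformly in $q$. Blind cancellation is unavailable because the tensor product is already reduced, so one has to produce explicit filtered bases in which the complex splits off the staircase of $T_{3q+2,3}$; the principal danger is that a looser argument would only pin down the graded Euler characteristic, hence would fail to exclude a different staircase with the same Alexander polynomial. Once the splitting is in hand the involutive refinement is a routine homological--algebra addendum, whose only substantive input is the rigidity of $\iota_K$ on $L$-space knots.
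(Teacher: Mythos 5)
Your strategy for the non-involutive statement is the paper's: realize all three complexes as semigroup staircases and split the tensor product, by an explicit filtered change of basis, into the staircase of $T_{3q+2,3}$ plus acyclic summands. But the step you flag as ``the main obstacle'' is the entire content of the proof, and it does not need your base-case-plus-induction scheme: writing $z_0,\dots,z_{2q}$ for the staircase generators of $T_{3q+1,3}$ and $a,b,c$ for those of $T_{3,2}$ (with $\partial c=a+b$), the elements $a\otimes z_0,\dots,a\otimes z_q,\ c\otimes z_q,\ b\otimes z_q,\dots,b\otimes z_{2q}$ span, in closed form and uniformly in $q$, a staircase subcomplex which a direct comparison of Alexander and algebraic filtrations identifies with $CFK^\infty(T_{3q+2,3})$, while the complement is a direct sum of four-generator acyclic ``squares'' spanned by $c\otimes z_{2i+1}$, $a\otimes z_{2i+1}+c\otimes z_{2i}$, $b\otimes z_{2i+1}+c\otimes z_{2i+2}$, $b\otimes z_{2i}+a\otimes z_{2i+2}$. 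Without such an explicit basis your argument is a plan rather than a proof, and, as you yourself observe, Euler-characteristic bookkeeping alone cannot rule out a different staircase with the same Alexander polynomial; so the decisive step is missing.

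For the involutive refinement your route genuinely differs from the paper's, and it has two gaps. First, the involution on Zemke's involutive tensor product is not $\iota_{T_{3q+1,3}}\otimes\iota_{T_{3,2}}$: it is $\iota_{T_{3q+1,3}}\otimes\iota_{T_{3,2}}+U^{-1}(\phi\otimes\psi)\circ(\iota_{T_{3q+1,3}}\otimes\iota_{T_{3,2}})$, with a correction term built from the basepoint maps, and the paper's involutive computation consists precisely in evaluating that correction on the explicit basis (it vanishes there because $\phi_{T_{3,2}}$ kills $a$ and $b$ and $\psi_{T_{3q+1,3}}$ kills $z_q$). Second, your ``transport, invoke rigidity, then block-diagonalize'' argument needs justification at both of its hinges: the $C$-block of the transported map is a priori only a skew-filtered chain map inducing the identity on homology, and you must show it is skew-filtered homotopic to the reflection involution of the staircase (the $L$-space rigidity statement you cite is about $\iota_K$ itself, not about arbitrary such chain maps); and the off-diagonal blocks into and out of the acyclic summand must be killed by skew-filtered null-homotopies so that the conjugating equivalence is itself filtered. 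These points are plausible but not automatic; the paper sidesteps them entirely by computing the product involution directly on the new basis and checking that it preserves both summands.
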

Consequently, the knots belonging to family (2) and (3) of Lemma \ref{sums} (corresponding respectively to the cases when $q$ is  odd and even) cannot be distinguished from a Floer thin knot by means of any upsilon type invariant \cite{alfieri1}.

Studying the Heegaard Floer correction terms \cite{OS24} of double branched covers
and using a result by Owens and Strle \cite{OwensStrle},  
we prove the following theorem which deals with the second family in Lemma. \ref{sums}. 
\begin{thm}\label{firstfamily}
 Knots of the form $K=T_{6c-1,3} \# -T_{6c-2,3}$ are not concordant to alternating knots.  
\end{thm}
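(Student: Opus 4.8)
The plan is to obstruct a concordance to an alternating knot by studying the Heegaard Floer correction terms of the double branched cover $Y_K:=\Sigma_2(K)$. The starting point is a determinant computation: since $\det(T_{2a,q})=q$ whenever $q$ is odd and $\gcd(2a,q)=1$, and $\det(T_{p,q})=1$ when $p,q$ are both odd, we get $\det(K)=\det(T_{6c-1,3})\cdot\det(T_{6c-2,3})=1\cdot 3=3$, so $|H_1(Y_K)|=3$ for every $c\geq 1$. This prime value is fixed, independently of any knot $K$ might be concordant to.

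Suppose, for contradiction, that $K$ is concordant to an alternating knot $J$. Then $K\#-J$ is slice, so $\Sigma_2(K\#-J)=Y_K\ \#\ {-}\Sigma_2(J)$ bounds a rational homology ball; equivalently, the double cover of the concordance branched along the annulus is a rational homology cobordism $V$ from $Y_K$ to $\Sigma_2(J)$. On the other hand, a reduced alternating diagram of $J$ has two checkerboard colourings, and the two associated Goeritz forms are definite of opposite signs (classical, Murasugi--Crowell); the branched double covers of $B^4$ along the two pushed-in shaded surfaces thus exhibit $\Sigma_2(J)$ as the boundary of both a positive-definite and a negative-definite $4$-manifold. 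Capping off $V$ with these, we conclude that $Y_K$ itself bounds a negative-definite $4$-manifold and also a positive-definite $4$-manifold.

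Now we invoke the Owens--Strle obstruction \cite{OwensStrle}: a rational homology sphere $Y$ whose first homology has prime order $p$ and which bounds a negative-definite $4$-manifold must satisfy $\max_{\mathfrak s\in\Spinc(Y)}d(Y,\mathfrak s)\geq\frac{p-1}{4p}$. Applying this to $Y_K$ and to $-Y_K$ (with $p=3$) forces simultaneously
\[
\max_{\mathfrak s}d(Y_K,\mathfrak s)\geq\tfrac16
\qquad\text{and}\qquad
\min_{\mathfrak s}d(Y_K,\mathfrak s)\leq-\tfrac16 .
\]
It therefore suffices to show that at least one of these inequalities fails. For this we compute the three correction terms of
\[
Y_K=\Sigma(2,3,6c-1)\ \#\ {-}\Sigma_2(T_{6c-2,3}),
\]
using additivity under connected sum together with the fact that $\Sigma(2,3,6c-1)$ is a Brieskorn homology sphere, with a known closed-form correction term, and that $\Sigma_2(T_{6c-2,3})$ is a Seifert fibered rational homology sphere with $H_1=\Z/3$, whose three correction terms can be read off from its (almost-rational) negative-definite plumbing via the Ozsv\'ath--Szab\'o algorithm or computed recursively. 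Carrying this out as a function of $c$ should yield a triple of correction terms of $Y_K$ that violates the displayed inequalities for all $c\geq 1$, producing the contradiction and hence proving the theorem.

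The main obstacle is the last step: producing a closed-form, or at least sufficiently controlled, description of the correction terms of the whole family $\Sigma_2(T_{6c-2,3})$ and verifying the violation \emph{uniformly} in $c$ (the Brieskorn summand alone is easy; the Seifert summand with torsion $H_1$ is where the work is). A secondary point requiring care is the bookkeeping of the conjugation symmetry and of the identification of spin$^c$ structures along the rational homology cobordism $V$, together with a check of the precise hypotheses under which the Owens--Strle bound applies --- here automatic, since $|H_1(Y_K)|=3$ is prime.
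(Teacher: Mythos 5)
Your road map coincides with the paper's: compute $\det(K)=3$, turn a hypothetical concordance to an alternating knot $J$ into definite fillings of $\Sigma_2(K)$ by capping the branched double cover of the concordance with the Goeritz fillings of $\Sigma_2(J)$, and then contradict the Owens--Strle bound (this is exactly Lemma~\ref{criterion}) by computing the three correction terms of $\Sigma_2(K)=\Sigma(2,3,6c-1)\#-\Sigma(2,3,6c-2)$. All of the soft steps you describe are correct, including the square-free hypothesis check for $\delta=3$.

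The problem is that the argument as written stops exactly where the real work begins. The conclusion rests entirely on the claim that the correction terms of $\Sigma_2(K_c)$ ``should yield'' a violation of one of the two inequalities \emph{uniformly in $c$}, and you do not establish this; an obstruction proof with an unverified inequality proves nothing. What is actually needed (and what the paper does) is: (i) $d(\Sigma(2,3,6c-1))=2$ for all $c$, which follows quickly from the surgery formula $d(S^3_{1/c}(T_{3,2}))=2V_0(T_{3,2})$ with $V_0(T_{3,2})=1$; and (ii) the three correction terms of $\Sigma(2,3,6c-2)$ are $\{3/2,\,1/6,\,1/6\}$ \emph{independently of $c$} (Lemma~\ref{famiglia1}). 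Point (ii) is the heart of the matter: it requires running the Ozsv\'ath--Szab\'o plumbing algorithm on a three-legged graph whose central leg grows with $c$, and showing that among all admissible characteristic vectors only a short, $c$-independent list initiates good full-paths -- this is several pages of case analysis in the paper, not a routine verification. Once (i) and (ii) are in hand, all three correction terms of $\Sigma_2(K_c)$ are strictly positive, so $\min_{\mathfrak s} d(\Sigma_2(K_c),\mathfrak s) > 0 > -1/6$ and your second displayed inequality fails, completing the contradiction. Without carrying out (ii), or at least exhibiting a mechanism forcing the answer to stabilize in $c$, the proof is an outline rather than a proof.
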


Combining Lemma \ref{sums} with the results of Theorem \ref{uno} and \ref{firstfamily} we obtain a proof of Theorem \ref{mainresult}. 

%We intend to revisit this problem in the future and further investigate the family $\{T_{6n+2,3}\# -T_{6n+1,3}\}$ as well as linear combinations with more than two summands (see Proposition \ref{upsilon} below).
%We include a short discussion about this problem at the end of the paper.

{\small 
\subsection*{Acknowledgements} The authors would like to thanks  Andr\' as I. Stipsicz, Francesco Lin, Brendan Owens, Jennifer Hom, Irving Dai, and Ian Zemke for many useful conversations. Special thanks are also due to Andr\' as Nemethi for generously sharing his expertise.  The first authour was partially supported by ERC grant LTDBud and by MPIM. The second author was partially supported by the NKFIH grant K112735.
\par}

\section{Uspilon-alternating knots}\label{sectionone}
\subsection{Preliminaries on the upsilon invariant}
The upsilon invariant, introduced by Ozsv\' ath, Stipsicz and Szab\' o \cite{OSS4}, 
associates to a knot $K \subset S^3$ a continuous piecewise linear function 
$\Upsilon_K : [0, 2] \to \R$ with the following properties:
\begin{itemize}
\item (Invariance) $\Upsilon_K(t)$ is a knot concordance invariant,
\item(Relation with $\tau$) $\Upsilon_K(t)=- t \cdot \tau(K) $ for $t$ near to zero,
where $\tau$ denotes the concordance invariant introduced by Ozsv\' ath and 
Szab\' o in  \cite{tau}, 
\item (Symmetry) $\Upsilon_K(t)=\Upsilon_K(2-t)$ for all $t \in [0,2]$, 
\item (Additivity) if $K= K_1 \# K_2$ is a connected sum then  
\[\Upsilon_K(t)= \Upsilon_{K_1}(t) + \Upsilon_{K_2}(t) \ , \] 
\item(Mirror) $\Upsilon_{-K}(t)= - \Upsilon_{K}(t)$ where $-K$ denotes the mirror of $K$, 
\item (Slice Genus) if $g_s$ denotes the smooth slice genus then 
\[ \left| \Upsilon_K(t) \right| \leq t g_s(K)   \ .\] 
\end{itemize}
Thus $K \mapsto\Upsilon_K(t)$ descends to a group homomorphism  from the concordance group to $C^0_{PL}[0,2]$
the vector space of continuous piecewise linear functions $[0,2] \to \R$. We will review the definition of $\Upsilon_K(t)$ in Section \ref{sectiontwo} following \cite{Livingston1}. In this section we only need Proposition \ref{orsobalosso} below which provides an algorithm for computing the upsilon function of torus knots.

\begin{prop}[Bodn\' ar \& N\' emethi, Feller \& Krcatovich \cite{feller, Bodnar1}]\label{orsobalosso} Denote by $\Upsilon_{a,b}(t)$ the upsilon function of the torus knot $T_{a,b}$ with $a>b$, then 
\[ \Upsilon_{a,b}(t)= \Upsilon_{a-b,b}(t) +\Upsilon_{a+1,a}(t) \ . \]
Consequently, if  $q_i$ and $r_i$ denote respectively the quotients and the remainders occurring in the Euclidean algorithm for $a$ and $b$ (so that $r_0=a$, $r_{-1}=b$, and $r_{i-1}= q_i r_i + r_{i+1}$), we have that
\[ \Upsilon_{a,b}(t)= \sum_{i=0}^n q_i \cdot \Upsilon_{r_{i}+1, r_i}(t) \ .\]
\end{prop}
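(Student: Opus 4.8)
The plan is to establish the first (recursive) identity $\Upsilon_{a,b}(t) = \Upsilon_{a-b,b}(t) + \Upsilon_{a+1,a}(t)$ and then derive the closed sum over the Euclidean algorithm by straightforward induction. For the recursion, I would work with the semigroup/gap-function description of $\Upsilon$ for torus knots, as in \cite{feller, Bodnar1}: the function $\Upsilon_{T_{a,b}}(t)$ is computed from the numerical semigroup $S_{a,b} = \langle a, b \rangle \subset \Z_{\geq 0}$ generated by $a$ and $b$, via a formula expressing $\Upsilon_{a,b}(t)$ as a minimum (over lattice points, or over elements of the semigroup) of a linear functional in $t$. The key combinatorial input is then an identity relating the semigroup $S_{a,b}$ to the semigroups $S_{a-b,b}$ and $S_{a+1,a}$ at the level of the invariant extracted. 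Concretely, Feller–Krcatovich observed that passing from $T_{a,b}$ to $T_{a-b,b}$ corresponds to a cabling/torus-knot relation whose effect on $\Upsilon$ is an exact shift by $\Upsilon_{a+1,a}$; I would reproduce that computation, checking it on the level of the piecewise-linear functions by comparing slopes and breakpoints, which are governed by the gaps of the respective semigroups.

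First I would recall precisely the formula from \cite{feller, Bodnar1} that computes $\Upsilon_{a,b}(t)$ from $S_{a,b}$ — for instance the expression involving $\min$ over the complement of the semigroup, or equivalently the description of $\Upsilon_{a,b}$ as (essentially) the Legendre-type transform of the gap-counting function of $S_{a,b}$. Second, I would verify the base relation: $T_{a+1,a}$ has semigroup $\langle a, a+1 \rangle$, whose $\Upsilon$ is explicitly known and small (its slices genus is $\binom{a}{2}$), and $T_{a-b,b}$ is again a torus knot when $\gcd(a,b)=1$. Third, I would prove the additive identity $\Upsilon_{a,b} = \Upsilon_{a-b,b} + \Upsilon_{a+1,a}$ by a direct manipulation: either (a) a four-genus/cobordism argument exhibiting a genus-$\binom{a}{2}$ cobordism realizing the difference, refined so that it controls all of $\Upsilon$ and not just its endpoint behaviour; or (b) a purely combinatorial check that the three gap functions satisfy the matching linear relation pointwise in $t$. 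Approach (b) is the more self-contained, and I expect to carry it out by comparing the two piecewise-linear functions on each linearity interval $[2j/(ab), 2(j+1)/(ab)]$ — though one must be careful that the natural subdivisions for the three knots do not coincide, so the comparison is made on the common refinement.

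Once the recursion is in hand, the closed formula is immediate by induction on the number of steps $n$ in the Euclidean algorithm for $(a,b)$. If $r_0 = a$, $r_{-1}=b$, and $r_{i-1} = q_i r_i + r_{i+1}$, then applying the recursion $q_1$ times peels off $q_1$ copies of $\Upsilon_{r_0+1,r_0}$ — wait, more carefully: each application of $\Upsilon_{a,b} \mapsto \Upsilon_{a-b,b}$ subtracts $b$ from the larger argument and contributes a $\Upsilon_{a+1,a}$ term whose index changes at each step; grouping the $q_i$ consecutive subtractions at level $i$ gives the claimed coefficient $q_i$ on $\Upsilon_{r_i+1,r_i}(t)$, using additivity of $\Upsilon$ under connected sum to accumulate the terms. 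The induction terminates when the remainder reaches $0$ (equivalently the torus knot degenerates to the unknot, with $\Upsilon \equiv 0$), which is exactly the $i=n$ endpoint of the Euclidean algorithm.

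The main obstacle I anticipate is the proof of the single-step recursion itself: verifying $\Upsilon_{a,b} = \Upsilon_{a-b,b} + \Upsilon_{a+1,a}$ as functions on all of $[0,2]$, rather than near $t=0$ (where it reduces to the classical identity $\tau(T_{a,b}) = \tau(T_{a-b,b}) + \binom{a}{2}$ on the $\tau$-invariants / semigroup counting). The subtlety is bookkeeping the breakpoints: the three functions are piecewise linear with denominators $ab$, $(a-b)b$, and $a(a+1)$ respectively, so one genuinely has to argue on a common refinement and track which semigroup element attains the minimum in the Feller–Krcatovich formula on each subinterval. Since this recursion is precisely the content of \cite{feller} and \cite{Bodnar1}, for the purposes of this paper I would simply cite it and spend the remaining effort on the (routine) inductive derivation of the summation formula, which is the form actually used in the sequel.
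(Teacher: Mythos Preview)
The paper does not actually prove this proposition: it is stated as a result of Bodn\'ar--N\'emethi and Feller--Krcatovich with citations to \cite{feller, Bodnar1}, and then used as a black box in the subsequent arguments. Your final suggestion---to cite the single-step recursion from those references and derive the summation formula by routine induction on the steps of the Euclidean algorithm---is exactly in line with (indeed, slightly more explicit than) the paper's treatment.

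One small correction to your inductive sketch. The recursion as proved by Feller--Krcatovich reads, for $a>b$,
\[
\Upsilon_{a,b}(t)=\Upsilon_{a-b,b}(t)+\Upsilon_{b+1,b}(t);
\]
the ``$\Upsilon_{a+1,a}$'' in the displayed statement is evidently a typo (compare with how the formula is applied in the proof of Proposition~\ref{upsilon}, where $r_0=n$ is the \emph{smaller} parameter). With the correct form, the added term $\Upsilon_{b+1,b}$ does \emph{not} change as you repeatedly subtract $b$ from $a$, so your first instinct was right and your self-correction unnecessary: applying the recursion $q_0$ times peels off $q_0$ identical copies of $\Upsilon_{r_0+1,r_0}$, and the induction is completely straightforward.
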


The functions $\Upsilon_{i+1,i}(t)$ can be explicitly computed: 
for $t \in [2n/i, 2n+2/i]$ we have that $\Upsilon_{i+1,i}(t)=-n(n+1)-i(i-1-2n)t/2$.
Notice that $\Upsilon_{i+1,i}(t)$ has its first singularity at $t=2/i$. It follows that: 
\begin{itemize}
\item the functions $\left\{ \Upsilon_{i+1,i}(t) \right\}_{i=2}^\infty$ are linearly independent in $C^0_{PL}[0,2]$,
\item if $K$ is a $(p,q)$ torus knot then $\Upsilon_K(t)$ has its first singularity at $t=2/\min(p,q)$.
\end{itemize} 

\begin{proof}[Proof of Proposition \ref{luckycases}]
The upsilon function of a linear combination 
\[K= m_1 T_{p_1,q_1}\# \dots \# m_k T_{p_k,q_k}\]
($p_i>q_i$) has its first singularity at $t=2/q$, where $q= \max_i \big\{q_i \ | \text{ such that } m_i\not=0 \big\}$.
Since the the upsilon function of an alternating knot has at most one singularity at $t=1$, $\Upsilon_K(t)$ has the form of the upsilon function of an alternating knot only if $q=q_1=2$, meaning that $K=m_1  \cdot T_{p_1,2}$.
\end{proof}

\begin{proof}[Proof of Proposition \ref{positive}]
As a consequence of Proposition  \ref{orsobalosso}, the upsilon function of a sum 
of positive torus knots $K$ can be uniquely written as sum
\[ \Upsilon_K(t)= \sum_{i=2}^\infty m_i \cdot \Upsilon_{i+1,i}(t)  \]
with finitely many non-zero $m_i$'s, $m_i \geq 0$, and $m_i=0$ if and only if $\Upsilon_{i+1,i}$ does not appear in any of the expressions of the upsilon functions of the summands of $K$ in terms of the basis $\{\Upsilon_{i+1,i} \}_{i=2}^{\infty}$ of Proposition \ref{orsobalosso}.  

Since the $\Upsilon_{i+1,i}$'s are linearly independent and have their first singularity at $t=2/i$, we have that  $\Upsilon_K(t)$ is a multiple of $f(t)= (1-|1-t|)$ if and only if $m_i=0$ for $i >2$. This forces $K$ to be a sum of $(n,2)$ torus knots proving the claim.
\end{proof}

\subsection{Upsilon-alternating linear combinations of two torus knots} Linear combinations of two torus knots with the upsilon function of an alternating knot can be characterized as follows.
\begin{prop}\label{upsilon} 
Let $K$ be a linear combination of two torus knots. 
Then $K$ is upsilon-alternating if and only if (up to mirroring) one of the 
following holds 
\begin{enumerate}
\item $K$ is slice (since torus knots are linearly independent in the concordance group this can only happen if $K$ is zero as linear combination), 
\item $K$ is alternating, more specifically it is of the form $K=aT_{n,2}\#bT_{m,2}$ for some $a, b \in \Z$ and $m,n >0$ odd,
\item $K=aT_{cbr+1,r}\# -bT_{car+1,r}$ with $a,b,c>0$, $r \geq 3$, and either:
\begin{itemize}
\item $r$ is even
\item $r$ is odd, and $c$ is even,
\item $r$ and $c$ are odd, $a$ and $b$ are even,
\end{itemize}
\item $K=aT_{cbr+2,r}\# -bT_{car+2,r}$ with $a,b,c>0$, $r \geq 3$ odd, and either
\begin{itemize}
\item $c$ is even
\item $c$ is odd, $a$ and $b$ are even,
\item $c$ and $a$ are odd, and $r \equiv 1 \ (\text{mod 4})$,
\end{itemize}
\item $K=aT_{cbr+2, r}\# -bT_{car+1,r}$ with $a,b, c>0$, $r \geq 3$, and either:
\begin{itemize}
\item $r$ is odd, and either $c$ is  even, or $a$ and $b$ are even,
\item $b$ and $c$ are odd, $a$ is even, and $r \equiv 1 \ (\text{mod 4})$,
\item $a,b$ and $c$ are odd, $r \equiv -1 \ (\text{mod 4})$, and $b(r-1)=2a$.  
\end{itemize}
\end{enumerate} 
\end{prop}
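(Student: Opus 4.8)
The plan is to convert \emph{upsilon-alternating} into the basis of Proposition~\ref{orsobalosso} and then run the analysis in two stages. Recall that $K$ is upsilon-alternating precisely when $\Upsilon_K(t)=\tfrac{\sigma(K)}{2}(1-|1-t|)=-\tfrac{\sigma(K)}{2}\Upsilon_{3,2}(t)$. By additivity of $\Upsilon$ and Proposition~\ref{orsobalosso}, for any linear combination of torus knots $\Upsilon_K=\sum_{i\ge 2}\mu_i\,\Upsilon_{i+1,i}$, where each $\mu_i\in\Z$ is obtained by summing, with signs, the Euclidean quotients of the summands attached to the remainder $i$. Since the $\Upsilon_{i+1,i}$ are linearly independent and $\Upsilon_{i+1,i}$ has its first singularity at $t=2/i$, the function $\Upsilon_K$ is a (possibly zero) multiple of $(1-|1-t|)$ if and only if $\mu_i=0$ for every $i\ge 3$; and when this holds the slope of $\Upsilon_K$ at $t=0$ identifies the multiple as $-\tau(K)$, so the definition reduces to the single extra equation $\tau(K)=-\tfrac{1}{2}\sigma(K)$. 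The first thing I would do is therefore classify the $K$ with $\mu_i=0$ for all $i\ge 3$, and then impose $\tau(K)=-\tfrac12\sigma(K)$ on that list.

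For the first stage, write $K=aT_{p_1,q_1}\#bT_{p_2,q_2}$ with the $T_{p_j,q_j}$ positive and $p_j>q_j\ge 2$, absorbing mirrors into the signs of $a,b$. The highest-index basis function occurring in $\Upsilon_{T_{p_j,q_j}}$ is $\Upsilon_{q_j+1,q_j}$, with nonzero coefficient $\lfloor p_j/q_j\rfloor$. If $q_1\ne q_2$, the larger $q$ contributes an uncancellable term $\Upsilon_{q+1,q}$ with $q\ge 3$, forcing one of $a,b$ to vanish; then $K$ is a multiple of a single torus knot, which kills $\mu_i$ for $i\ge 3$ only for $(p,2)$--torus knots, giving cases (1)--(2). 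So assume $q_1=q_2=:r$ with $a,b\ne 0$. If $r=2$ then $K$ is a connected sum of $(p,2)$--torus knots and their mirrors, hence alternating, which is case (2) (and conversely). If $r\ge 3$, vanishing of the $\Upsilon_{r+1,r}$--coefficient forces $a\lfloor p_1/r\rfloor=b\lfloor p_2/r\rfloor$ with $a,b$ of opposite sign, so after mirroring $K=aT_{p_1,r}\#-bT_{p_2,r}$ with $a,b>0$. Writing $p_j=k_jr+s_j$ with $0<s_j<r$, Proposition~\ref{orsobalosso} gives $\Upsilon_{T_{p_j,r}}=k_j\Upsilon_{r+1,r}+\Upsilon_{T_{r,s_j}}$, and $\Upsilon_{T_{r,s_j}}$ has first singularity at $t=2/s_j$. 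Using $\Upsilon_{T_{r,1}}=0$, $\Upsilon_{T_{r,2}}=\tfrac{r-1}{2}\Upsilon_{3,2}$ for $r$ odd, and comparing the largest index present in $\Upsilon_{T_{r,s_1}}$ and $\Upsilon_{T_{r,s_2}}$, one finds that either $p_1=p_2$ (so $K$ is slice, case (1)) or $s_1,s_2\in\{1,2\}$, with $r$ forced odd whenever some $s_j=2$. According as $(s_1,s_2)$ is $(1,1)$, $(2,2)$ or $(1,2)$ this produces the shapes in (3), (4), (5); the only remaining constraint, $a\lfloor p_1/r\rfloor=b\lfloor p_2/r\rfloor$, is then solved by the parametrization $p_1=cbr+s_1$, $p_2=car+s_2$ with $a,b,c\ge 1$.

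For the second stage, note that since $\tau(T_{p,r})=g(T_{p,r})=\tfrac{(p-1)(r-1)}{2}$ for positive torus knots, additivity turns $\tau(K)=-\tfrac12\sigma(K)$ into $a\,n_+(T_{p_1,r})=b\,n_+(T_{p_2,r})$, where $n_+(T_{p,r}):=g(T_{p,r})+\tfrac12\sigma(T_{p,r})$ is the (nonnegative, integral) number of positive eigenvalues of the symmetrized Seifert form. So it remains to evaluate $n_+(T_{p,r})$ --- equivalently $\sigma(T_{p,r})$, which for fixed $r$ is quasi-polynomial in $p$ --- on the residue classes $p\equiv 1,2\pmod r$, and to decide when $a\,n_+(T_{p_1,r})=b\,n_+(T_{p_2,r})$ holds for the knots of (3), (4), (5). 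This is exactly where the parity/congruence conditions come from: generically, one of $a,b,c$ even makes the equality automatic, while if $a,b,c$ are all odd it becomes a genuine condition --- forcing $r\equiv 1\pmod 4$ in family (4), and $r\equiv -1\pmod 4$ together with $b(r-1)=2a$ in family (5).

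The combinatorial part --- reducing to $q_1=q_2=r$, the restriction $s_1,s_2\in\{1,2\}$, and the single Diophantine equation $a\lfloor p_1/r\rfloor=b\lfloor p_2/r\rfloor$ --- is essentially forced by Proposition~\ref{orsobalosso} and the linear independence of the $\Upsilon_{i+1,i}$, so I do not expect difficulty there. The hard part is the arithmetic of the last stage: one must pin down $\sigma(T_{p,r})$ modulo $4$ (equivalently $n_+(T_{p,r})$ modulo $2$) for $p\equiv 1,2\pmod r$, keep careful track of the parities of $a,b,c$ and of $r$ mod $4$, handle the all-odd borderline cases individually, and verify that every solution of $a\lfloor p_1/r\rfloor=b\lfloor p_2/r\rfloor$ satisfying the signature constraint --- including those with $\gcd(a,b)>1$ --- is captured by the stated parametrization.
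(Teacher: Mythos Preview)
Your plan is correct and follows essentially the same two–stage strategy as the paper: first use Proposition~\ref{orsobalosso} and the linear independence of the $\Upsilon_{i+1,i}$ to force $q_1=q_2=r$ and the first remainders $s_1,s_2\in\{1,2\}$ (the paper does this by comparing the full Euclidean remainder sequences case by case, while your one–step argument via the largest index in $\Upsilon_{T_{r,s_j}}$ reaches the same conclusion more directly), and then impose $\tau(K)=-\sigma(K)/2$ via torus–knot signature formulas. The paper carries out the second stage using the recursion \eqref{torusknotssignature} together with closed forms for $\sigma(r,r-1)$ and $\sigma(r,r-2)$, which is exactly the computation you flag as the remaining work; your caution about the parametrization $k_1=cb,\ k_2=ca$ when $\gcd(a,b)>1$ is well placed, as the paper passes over this point without comment.
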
 
\begin{proof}
Let $K$ be a non-zero linear combination of two torus knots. 
We first characterize those $K$ with $\Upsilon_K(t)$ multiple of $f(t)=1-|1-t|$, then we compute their signature to prove that in fact the relation $\Upsilon(t)= \sigma/2 \cdot (1-|1-t|)$ holds only in the cases displayed in the statement.

Suppose that $\Upsilon_K(t)$ has at most one singularity at $t=1$, meaning that $\Upsilon_K'(t)$ is discontinuous at most at $t=1$. Notice that this is the same as saying that $\Upsilon_K(t)$ is a multiple of $\Upsilon_{3,2}(t)=1-|1-t|$. 

Note that as a consequence of Proposition \ref{positive} and Proposition \ref{luckycases}  
we can assume that $K$ is in the form $aT_{m,n}\# -bT_{m',n}$ for some  positive 
integers $a, b \in \Z$, and a pair of coprime positive integers $(m,n)$ and $(m',n)$. Assume $m>n$ and $m'>n$. 
Denote by $\textbf{r}=(m,n, r_1, \dots, r_s, 1)$, $\textbf{q}=(q_0, \dots, q_s)$ and $\textbf{r}'=(m',n, r_1', \dots, r_s', 1)$, $\textbf{q}'=(q_0', \dots, q_{s'}')$ the vectors of residues and quotients of the Euclidean algorithm for the pairs $(m,n)$ and $(m',n)$ respectively. Set $r_{-1}=m$, $r_{-1}'=m'$, $r_{0}=r_{0}'=n$ and $r_s=r_{s'}=1$ so that 
\[ r_{i-1}= q_i r_i +r_{i+1} \ \ \  r_{j-1}'= q_j' r_j' +r_{j+1}'  \ , \]   
for $i=0, \dots , s$ and $j=0, \dots, s'$.

According to Proposition \ref{orsobalosso} we have
\[\Upsilon_{m,n}(t)= \sum_{i=0}^s q_i \cdot  \Upsilon_{r_i+1, r_i}(t) \ \ \ \ ; \ \ \ 
\Upsilon_{m',n}(t)= \sum_{i=0}^{s'} q_i \cdot  \Upsilon_{r_i'+1, r_i'}(t)  \]
and consequently
\begin{equation}\label{important}
\Upsilon_K(t)= a  \left(\sum_{i=0}^s q_i \cdot\Upsilon_{r_i+1, r_i}(t) \right) - b \left(  \sum_{j=0}^{s'} q_i \cdot \Upsilon_{r_j'+1, r_j'}(t)  \right)\ .
\end{equation}

We now want to solve the functional equation 
$\Upsilon_{m,n}(t)-  \Upsilon_{m',n}(t)= C \cdot \Upsilon_{3,2}(t)$ for $m$ and $m'$.
We distinguish three cases.

\textit{Case I.} $r_s=r_{s'}>2$. By linear independence of the $\Upsilon_{i+1,i}$'s, from Equation \ref{important}  we can conclude that $s=s'$, $r_i=r_i'$ for $i=0, \dots, s$ and $a \mathbf{q}= b \mathbf{q}'$. Assume that $s \geq 1$. By imposing the condition 
\[q_sr_s+1=r_{s-1}=r_{s-1}'=q_s'r_s'+1=q_s'r_s+1 \] 
one can conclude that $q_s=q_s'$, $a=b$ and $m=m'$, \text{i.e.} 
$K=aT_{m,n}-aT_{m,n}$ is slice. If $s=0$ we can conclude that $K$ is in the form 
$aT_{nbc+1,n}\# -bT_{nac+1,n}$.

\textit{Case II.} $r_s=2$ and $r_{s'}>2$. By inspecting Equation \ref{important} 
we see that $s'=s-1$, $r_i=r_i'$ for $i=0, \dots, s-1$ and 
$b  \mathbf{q}'= a \cdot (q_0, \dots, q_{s-1}) $. If $s\geq 2$ then we obtain
\[q_{s-1}r_{s-1}+1=r_{s-2}=r_{s-2}'=q_{s-1}'r_{s-1}'+2=q_{s-1}'r_{s-1}+2\] 
which is a contradiction $(\text{mod } r_s)$. 
Thus $s=1$ and $K$ is in the form $aT_{nbc+2,n}\# -bT_{nac+1,n}$ with $n$ odd.

\textit{Case III.} $r_s=r_{s'}=2$. Because of Equation \ref{important}  we have
$s=s'$, $r_i=r_i'$ for $i=0, \dots, s$ and $a \cdot  (q_0, \dots, q_{s-1})= b \cdot (q_0', \dots, q_{s-1}')$. Assume that $s \geq 2$. By imposing the condition 
\[q_{s-1}r_{s-1}+2=r_{s-2}=r_{s-2}'=q_{s-1}'r_{s-1}'+1=q_{s-1}'r_{s-1}+1\] 
one can conclude that $q_s=q_s'$, $a=b$ and $m=m'$, \text{i.e.} $K=aT_{m,n}\# -aT_{m,n}$ an hence that $K$ slice. Thus either $s=0$ and $K=aT_{m,2}\# -bT_{m',2}$ ($m$ and $m'$ odd), or $s=1$ and  $K=aT_{nbc+2,n}\# -bT_{nac+2,n}$.

Summarising, we have shown that if $K$ is a non-slice linear combination of two torus knots with $\Upsilon_K(t)= C \cdot (1-|1-t|)$ then either:
\begin{itemize}
\item $K=aT_{n,2}\# bT_{m,2}$ with $a, b \in \Z$ and $m,n >0$ odd,
\item $K=aT_{cbr+1,r}\# -bT_{car+1,r}$ with $a,b,c>0$, and $r \geq 3$, 
\item $K=aT_{cbr+2,r}\# -bT_{car+2,r}$ with $a,b,c>0$, and $r \geq 3$ odd, 
\item or $K=aT_{cbr+2, r}\# -bT_{car+1,r}$ with $a,b, c>0$, and $r \geq 3$ odd.
\end{itemize}
 
In order to conclude that the arithmetic conditions in the statement 
are satisfied notice that if $\Upsilon_K(t)= C \cdot (1-|1-t|)$ then $C=-\tau(K)$. 
Consequently,   such a $K$ is upsilon-alternating if and only if the relation $\tau(K)= -\sigma(K)/2$ holds. 

Therefore we need to evaluate the signature of the knots in the list above and compare it with the value of their $\tau$ invariant. The $\tau$ invariant of these knots is particularly easy to compute: $\tau$ is linear, and for the $(p,q)$ torus knot Ozsv\' ath and Szab\' o \cite{tau} proved that $\tau=(p-1)(q-1)/2$. The signature of torus knots on the other hand can be inductively computed as follows \cite{SignaturesTorusKnots}. Let $\sigma(q,r)$ denote the signature of the negative $(q,r)$ torus knot $-T_{q,r}$. 
Extend $\sigma(q,r)$ to the set of pairs $(p,r) \in \Z^2$ with  $r<0$,  $p>|r|$, and $\text{gcd}(p,q)=1$ setting $\sigma(p,r)=\sigma(p,p-r)$. Then 
\begin{equation}
 \sigma(q,r)=(-1)^m \sigma(r, (-1)^m k ) +f(m,r) \ , \label{torusknotssignature}
\end{equation}
where $m$ and $k$ respectively denote the quotient and the residue of the Euclidean division of $q$ and $r$ ($q=m r + k$ with $r>k>0$), and $f(m,r)$ is the function defined by Table \ref{table1}. By means of Equation \ref{torusknotssignature} the computation of the signature of the knots listed above reduces to the one of $\sigma(r,r-1)$ and $\sigma(r,r-2)$. An inductive argument again based on Equation \ref{torusknotssignature} shows that 
$\sigma(r,r-1)= (r-1)^2/2$
when $r\geq3$ is odd, and 
$\sigma(r,r-1)=(r^2-4)/2$
otherwise. Furthermore, 
$ \sigma(r,r-2)=(r-1)^2/2-2$
if $r\equiv -1 \ (\text{mod }4)$, and 
$\sigma(r,r-2)=(r-1)^2/2$
if otherwise $r\equiv 1 \ (\text{mod }4)$. With this said the claimed arithmetic conditions follows immediately.
\end{proof}

\begin{table}[t] 
    \begin{tabular}{ | c | c  | c  |}
    \hline 
    \phantom{$\Big[$} $f(m,r) \ $ & $r$ odd & $r$ even   \\ \hline
    \phantom{$\Big[$} $m$ odd $ \ $ & $\frac{1}{2}\cdot (m+1)(r^2-1) \ $ & $\frac{1}{2} \cdot (mr^2+r^2-4) \ $   \\ \hline
    \phantom{$\Big[$} $m$ even $ \ $ & $\frac{1}{2} \cdot (mr^2-m)$ & $\frac{1}{2} \cdot mr^2$ \\ \hline
    \end{tabular}
    \vspace{0.3cm}
    \caption{\label{table1}}
    \vspace{-0.5cm}
\end{table}

As an immediate corollary of Proposition \ref{upsilon} one gets Lemma \ref{sums}.

\section{Obstructions from the Kim-Livingston secondary invariant} \label{sectiontwo}
In \cite{OS2} Ozsv\' ath and Szab\' o introduced a package of three-manifolds invariants called Heegaard Floer homology. 
This circle of ideas was then used by the same authors \cite{OS7}, and independently by Rasmussen \cite{Ras1} to introduce a knot invariant called knot Floer homology.
For a concise introduction to these topics see \cite{HFKsurvey}.

Recall that knot Floer homology associates to a knot $K \subset S^3$ a finitely-generated, $\Z$-graded, $(\Z \oplus \Z)$-filtered chain complex $CFK^\infty(K)= (\bigoplus_{\x \in B} \F[U, U^{-1}] \cdot \x, \partial )$ with the following properties
\begin{itemize}
\item $\partial$ is $\F[U, U^{-1}]$-linear and given a basis element $\x \in B$,  $\partial \x = \sum_\y n_{\x, \y}U^{m_{\x,\y}} \cdot \y$ for suitable coefficients $ n_{\x, \y} \in \F$, and non-negative exponents $m_{\x, \y} \geq 0$,
\item the multiplication by $U$ drops the homological (Maslov) grading $M$ by two, and the filtration levels (denoted by $A$ and $j$) by one,
\item $H_*(CFK^\infty(K))= \F[U, U^{-1}]$  graded so that $\text{deg}U=-2$.
\end{itemize} 
In \cite{OS7} Ozsv\' ath and Szab\' o show that the filtered chain homotopy type of $CFK^\infty(K)$ only depends on the isotopy class of $K$.
The knot Floer complex $CFK^\infty(K)$ of a knot $K \subset S^3$ can be pictorially described as follows: 
\begin{enumerate}
\item picture each $\F$-generator $U^m \cdot \x$ of $CFK^\infty(K)$ on the planar lattice $\Z \times \Z \subset \R^2$ in position $\left(A(\x)-m, -m \right) \in \Z \times \Z$,
\item label each $\F$-generator $U^m \cdot \x$ of $CFK^\infty(K)$ with its Maslov grading $M(\x)-2m\in \Z$,
\item connect two $\F$-generators $U^n \cdot \x$ and $U^m \cdot \y $ with a directed arrow if in the differential of $U^n \cdot \x$ the coefficient of $U^m \cdot \y$ is non-zero.
\end{enumerate}
The Ozsv\' ath-Stipsicz-Szab\' o upsilon invariant is defined starting form  this picture as follows. 
For $t \in [0,2]$ and $r \in \R$ let $\mathcal{F}_{t,r}$ be the sub-complex of $CFK^\infty(K)$ spanned by the generators contained in the half-plane defined by the equation $t/2 A+ (1-t/2)j\leq r$. Then 
$\Upsilon_K(t)=-2 \cdot \gamma_K(t)$ where $\gamma_K(t)$ is the minimum $r$ for which the inclusion $\mathcal{F}_{t,r} \hookrightarrow CFK^\infty(K)$ induces a surjective map on $H_0$. 

As shown by Kim and Livingston in \cite{KimLiv}, other concordance invariants can be obtained by looking at which filtration levels certain expected homologies are realised. 
This leads to a two variable concordance invariant $\Upsilon^{(2)}_{K,t}(s)$. 
Given $t \in [0,2]$ let $\mathcal{Z}^+$ and $\mathcal{Z}^-$ denote the set of cycles with Maslov grading zero generating $H_0(CFK^\infty(K))$ and contained in $\mathcal{F}_{t+\delta, \gamma_K (t+\delta) }$ and $\mathcal{F}_{t-\delta, \gamma_K (t-\delta) }$ respectively. Since $H_0(CFK^\infty(K)) \simeq \Z_2$ has only one non-zero element, for given  $\xi^+ \in \mathcal{Z}^+$ and $\xi^- \in \mathcal{Z}^-$ there exists a chain with Maslov grading one $\beta \in 
CFK^\infty (K)$ such that $\partial \beta = \xi^+ - \xi^-$. We denote by $\gamma_{K,t}(s)$ the minimum $r$ for which $\mathcal{F}_{s,r}$ contains a $1$-chain realising a homology between a cycle in $\mathcal{Z}^+$ and one in $\mathcal{Z}^-$.
Set $\Upsilon_{K,t}^{(2)}(s)= -2 \cdot (\gamma_{K,t}(s)-\gamma_K(t))$.
Notice that $\Upsilon_{K,t}(s)$ is not defined if $\mathcal{Z}^+ \cap \mathcal{Z}^- \not= \emptyset $, in such a case we set $\Upsilon_{K,t}(s)=-\infty$.

\begin{proof}[Proof of Theorem \ref{uno}] 
This is an argument along the line of \cite[Proposition 1.2]{alfieri1}. More precisely, suppose by contradiction that for some $q \geq 1$, $r \geq 5$ odd, there exists an alternating knot $J$ for which the torus knot  $T_{qr+2,r}$ is concordant to $T_{qr+1,r}\# J$. Then, 
\[ \Upsilon_{T_{qr+2,r}, 4/r}^{(2)}\left(\frac{4}{r}\right)= 
\Upsilon_{T_{qr+1,r}\# J, 4/r}^{(2)}\left(\frac{4}{r}\right)=
\Upsilon_{T_{qr+1,r}, 4/r}^{(2)}\left(\frac{4}{r}\right) \ ,\]
where the first equality is because $\Upsilon^{(2)}$ is a concordance invariant, and the second one is consequence of \cite[Theorem 6.2]{alfieri1}. Notice that \cite[Theorem 6.2]{alfieri1} can be applied at $t=4/r<1$ (here $r>5$ by assumption) since the upsilon function of an alternating knot can have a singularity only at $t=1$. We now show that 
\[\Upsilon_{T_{qr+2,r}, 4/r}^{(2)}\left(\frac{4}{r}\right) \not= 
\Upsilon_{T_{qr+1,r}, 4/r}^{(2)}\left(\frac{4}{r}\right) \ . \]

Given positive integers $a_1, \dots, a_{2k}$ we construct a finitely generated $\Z$-graded, $\Z\oplus \Z$-filtered, chain complex  $C_*(a_1, \dots, a_{2k} )$ as follows. Set 
\[   C_*(a_1, \dots, a_{2k} )= \F\{x_0, \dots , x_k, y_0, \dots , y_{k-1} \} \otimes \Laur  \ ,\] 
and consider the differential  
\[
\begin{cases}
\ \partial x_i= 0 \ \ \ i=0, \dots , k   \\  
\ \partial y_i=x_i+ x_{i+1} \ \ \ i=0, \dots ,  k-1 
 \end{cases} \  .
\]
Define 
\[ \ 
\begin{cases}
 \  A(x_i)=n_i \\ 
 \ j(x_i)= m_i\\
 \ M(x_i)=0 
 \end{cases}
\   \text{ and } \ \ \  \ \
\begin{cases}
\ A(y_i)=n_i \\ 
\ j(y_i)=m_{i+1} \\
\ M(y_i)=1 
\end{cases}  
\] 
where 
\[ \ 
\begin{cases}
 \ n_i=g-\sum_{j=0}^{i}a_{2j} \\ 
 \ n_0=0
 \end{cases}
\  \ \ \  \ \
\begin{cases}
 \ m_i=\sum_{j=1}^i a_{2j-1} \\ 
 \ m_0=0
 \end{cases} \ , 
\] 
and coherently extend these gradings to $\F\{x_0, \dots , x_k, y_0, \dots , y_{k-1} \} \otimes \Laur$ so that the multiplication by $U$ drops the Maslov grading $M$ by two, and the Alexander filtration $A$ as well as the algebraic filtration $j$ by one. The resulting complex is the staircase complex of parameters $a_1, \dots , a_{2g}$ denoted by $C_*(a_1, \dots , a_{2g})$.

The knot Floer complex of a $(p,q)$ torus knot has a representative of the form $ C_*(a_1, \dots, a_{2k} )$. Let $g=(p-1)(q-1)/2$ denotes the four-dimensional genus of the $(p,q)$ torus knot. The semigroup generated by $p$ and $q$, $S_{p,q}=\{np+mq \ | \ n, m \in \Z_{\geq 0} \}$, determines a colouring of $\{0, \dots , 2g-1\}$: color by red the numbers in $S_{p,q} \cap \{0, \dots , 2g-1\} $ and by blue the one in its complement $(\Z \setminus S_{p,q}) \cap  \{0, \dots , 2g-1 \}$. By counting the gaps between blue and red numbers as suggested by Figure \ref{semigroup} we get two sequences of numbers $r_1, \dots r_k$ and $b_1, \dots , b_k$. In \cite{Peters} Petres shows that $CFK^\infty(T_{p,q})\simeq  C_*(r_1,b_1, \dots , r_k, b_k)$.

\begin{figure}
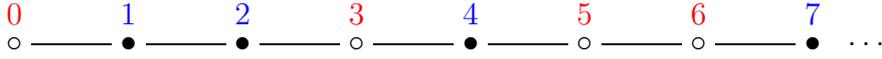

\hspace{0.1cm}
\xygraph{
!{<0cm,0cm>;<1cm,0cm>:<0cm,1cm>::}
!~-{@{-}@[|(2.5)]}
!{(-2,1.5) }*+{\dots}="x0"
!{(-2.7,1.5) }*+{\bullet}="x1"
!{(-4.2,1.5) }*+{\circ}="x2"
!{(-5.7,1.5) }*+{\circ}="x3"
!{(-7.2,1.5) }*+{\bullet}="x4"
!{(-8.7,1.5) }*+{\circ}="x5"
!{(-10.2,1.5) }*+{\bullet}="x6"
!{(-11.7,1.5) }*+{\bullet}="x7"
!{(-13.2,1.5) }*+{\circ}="x8"
!{(-2.7,1.9) }*+{\textcolor{blue}{7}}
!{(-4.2,1.9) }*+{\textcolor{red}{6}}
!{(-5.7,1.9) }*+{\textcolor{red}{5}}
!{(-7.2,1.9) }*+{\textcolor{blue}{4}}
!{(-8.7,1.9) }*+{\textcolor{red}{3}}
!{(-10.2,1.9) }*+{\textcolor{blue}{2}}
!{(-11.7,1.9) }*+{\textcolor{blue}{1}}
!{(-13.2,1.9) }*+{\textcolor{red}{0}}
"x2"-"x1"
"x3"-"x2"
"x4"-"x3"
"x5"-"x4"
"x6"-"x5"
"x7"-"x6"
"x7"-"x8"
}
		\caption{\label{semigroup} The elements of the semigroup generated by $5$ and $3$ correspond to the black dots. The staircase of the torus knot $T_{5,3}$ can be computed from the coloring above by counting the gaps between blue (black dotted) and red (white dotted) numbers. In this case  $r_1=1, r_2=1, r_3=2$, $b_1=2, b_2=1, b_3=1$, and $CFK^\infty(T_{5,3})= S_*(\textcolor{red}{1},\textcolor{blue}{2},\textcolor{red}{1}, \textcolor{blue}{1},\textcolor{red}{2}, \textcolor{blue}{1})$.}
\end{figure}

The semigroup of the torus knot $T_{qr+1,r}$ is given by
\[ S_{qr+1,r}= \bigcup_{j=0}^{r-2} \big\{n \equiv 0\text{ mod }r, \ jpr+1\leq n \leq (r-1)pr \big\} \  \cup \ \Z_{\geq(r-1)pr} \ .\]
Thus $CFK^\infty(T_{qr+1,r})=C_*(1, r-1,\dots 1, r-1, 2 , r-2, \dots , 2, r-2, \dots)$ where the pair $(i, r-i)$ appears $q$ times. Notice that at $t=4/r$ we have  $\mathcal{Z}^+=\{x_q\}$ and $\mathcal{Z}^-=\{x_{2q}\}$. A chain with Maslov grading one realising a homology between $x_q$ and $x_{2q}$ is given by $\beta =\sum_{i=q}^{2q-1} y_i$. Thus 
\begin{align*} \gamma_{T_{qr+1,r},\frac{4}{r}} \left(\frac{4}{r}\right)
&= \min_\xi \left\{\frac{2}{r}\cdot A(\beta + \partial \xi)+\frac{r-2}{r} \cdot j(\beta + \partial \xi) \right\} \\
&= \frac{2}{r}\cdot  A(\beta)+\frac{r-2}{r} \cdot j(\beta)\\ 
&= q(r-2)+2 \left( \frac{r-2}{r} \right) \ ,\end{align*}
where the minimum in the first line is taken over all $\xi \in CFK^\infty(T_{qr+1,r})$ with Maslov grading two. 
Here the second equality is due to the fact that the differential  of $CFK^\infty(T_{qr+1,r})$ vanishes on chains with even Maslov grading. 
The third one is a direct computation. Summing up we obtain 
\begin{align*} \Upsilon^{(2)}_{T_{qr+1,r},\frac{4}{r}} \left(\frac{4}{r}\right)
&= -2 \left( \gamma_{T_{qr+1,r},\frac{4}{r}} \left(\frac{4}{r}\right)- \gamma_{T_{qr+1,r}}\left(\frac{4}{r}\right)\right) \\
&= -2 \left( \gamma_{T_{qr+1,r},\frac{4}{r}} \left(\frac{4}{r}\right) + \frac{1}{2} \Upsilon_{qr+1,r}\left(\frac{4}{r}\right)\right)\\
&=  -2 \left( q(r-2)+2 \left( \frac{r-2}{r} \right) -q(r-2)\right) \ , \end{align*}
which leads to
\[\Upsilon^{(2)}_{T_{qr+1,r},\frac{4}{r}} \left(\frac{4}{r}\right)= -4 \cdot \frac{r-2}{r} \ . \]

For the torus knot $T_{qr+2,r}$ we have $S_{qr+2,r}=S_1 \cup S_2 \cup \Z_{\geq (r-1)(pr+1)}$, where
\[ S_1= \bigcup_{j=0}^{(r-1)/2} \big\{n \equiv 2j \text{ mod }r, \ jpr+1\leq n \leq (r-1)(pr+1) \big\} \]
and
\[ S_2= \bigcup_{j=1}^{(r-1)/2} \Big\{n \equiv 2j-1 \text{ mod }r, \ (r-1-2+j)(pr+1)/2\leq n \leq (r-1)(pr+1) \Big\} \ .\]
Thus $CFK^\infty(T_{qr+2,r})=C_*(1, r-1,\dots 1, r-1, 1, 1, 1, r-3, \dots , 1, 1, 1 , r-3, \dots)$ where the pattern $(1, \dots , 1, r-(2j+1))$ 
with $2j+1$ many $1$'s  appears $q$ times. 
In this case at $t=4/r$ we have  $\mathcal{Z}^+=\{x_q\}$ and $\mathcal{Z}^-=\{x_{3q}\}$. 
A chain with Maslov grading one realising a homology between $x_q$ and $x_{3q}$ is given by $\beta =\sum_{i=q}^{3q-1} y_i$. 
Following the same argument we did for $CFK^\infty(T_{qr+1,r})$, we conclude that 
\[ \gamma_{T_{qr+2,r},\frac{4}{r}} \left(\frac{4}{r}\right)= \frac{2}{r} A(\beta)+\frac{r-2}{r}j(\beta) 
= q(r-2)+ \frac{r-1}{r}+ \left( 2- \frac{6}{r} \right) \]
Thus 
\begin{align*} \Upsilon^{(2)}_{T_{qr+2,r},\frac{4}{r}} \left(\frac{4}{r}\right)
&= -2 \left(  \gamma_{T_{qr+2,r},\frac{4}{r}} \left(\frac{4}{r}\right)- \gamma_{T_{qr+2,r}}\left(\frac{4}{r}\right)\right) \\
&= -2 \left( \gamma_{T_{qr+2,r},\frac{4}{r}} \left(\frac{4}{r}\right) + \frac{1}{2} \Upsilon_{qr+2,r}\left(\frac{4}{r}\right)\right)\\
&=  -2 \left( 2- \frac{6}{r} \right)\ , \end{align*}
which leads to
\[\Upsilon^{(2)}_{T_{qr+2,r},\frac{4}{r}} \left(\frac{4}{r}\right)= -4 \cdot \frac{r-3}{r} \ . \]
Hence, $\Upsilon_{T_{qr+2,r}, 4/r}^{(2)}\left(\frac{4}{r}\right) > 
\Upsilon_{T_{qr+1,r}, 4/r}^{(2)}\left(\frac{4}{r}\right)$ proving the claim.
\end{proof}

The strategy used in the proof of Theorem \ref{uno}  cannot be adapted to deal with the other families of Lemma \ref{sums}.  
To see why this is the case recall the following result.
\begin{thm}[Hom \cite{HFKsurvey}]\label{jen} If two knots $K_1$ and $K_2$ are concordant then there exists $\Z$-graded, $(\Z \oplus \Z)$-filtered, acyclic chain complexes $A_1$ and $A_2$ such that
\begin{equation} \label{HomDependence}
CFK^\infty (K_1) \oplus A_1 \simeq CFK^\infty(K_2) \oplus A_2 \ ,
\end{equation}
where $\simeq$ denotes filtered chain homotopy equivalence. \QEDB
\end{thm}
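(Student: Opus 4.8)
\emph{Proof proposal.} The plan is to reduce to the case of slice knots and then invoke the only genuinely four-dimensional ingredient, namely functoriality of knot Floer homology under concordances. Recall that $K_1$ is concordant to $K_2$ exactly when $K:=K_1\#-K_2$ is slice, that the connected sum formula gives a filtered chain homotopy equivalence $CFK^\infty(K_1\#-K_2)\simeq CFK^\infty(K_1)\otimes CFK^\infty(-K_2)$, and that $CFK^\infty(-K_2)\simeq CFK^\infty(K_2)^*$ is the dual complex. So it suffices to prove the following \emph{slice case}: if $K$ is slice then $CFK^\infty(K)\simeq\F[U,U^{-1}]\oplus A$ for some acyclic $\Z$-graded, $(\Z\oplus\Z)$-filtered complex $A$. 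Granting this I would bootstrap as follows: applying the slice case to $K_1\#-K_2$ gives $CFK^\infty(K_1)\otimes CFK^\infty(K_2)^*\simeq\F[U,U^{-1}]\oplus A_0$, and tensoring with $CFK^\infty(K_2)$ and using the slice case once more to rewrite $CFK^\infty(K_2)^*\otimes CFK^\infty(K_2)\simeq CFK^\infty(K_2\# -K_2)\simeq\F[U,U^{-1}]\oplus A_1$ yields
\[
CFK^\infty(K_1)\oplus\big(CFK^\infty(K_1)\otimes A_1\big)\ \simeq\ CFK^\infty(K_2)\oplus\big(A_0\otimes CFK^\infty(K_2)\big),
\]
where both bracketed summands are acyclic since the tensor product of an acyclic free complex with any complex over $\F[U,U^{-1}]$ is again acyclic.

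To prove the slice case, let $D\subset B^4$ be a slice disk for $K$. Removing a small ball around an interior point of $D$ exhibits $D$ as a concordance in $S^3\times[0,1]$ from the unknot to $K$, and the same annulus read in the opposite direction is a concordance from $K$ to the unknot. By functoriality of knot Floer homology under decorated concordances these induce $U$-equivariant, grading-preserving, filtered chain maps $F\colon CFK^\infty(\text{unknot})\to CFK^\infty(K)$ and $G\colon CFK^\infty(K)\to CFK^\infty(\text{unknot})$, and the composite $G\circ F$ is the map induced by a concordance from the unknot to itself. Since $CFK^\infty(\text{unknot})\simeq\F[U,U^{-1}]$ has vanishing differential and a one-dimensional space of Maslov-grading-zero cycles, its only grading-preserving chain endomorphisms are $0$ and the identity; as the cobordism map of a concordance is a quasi-isomorphism this forces $G\circ F=\mathrm{id}$, and in particular $F$ and $G$ are quasi-isomorphisms. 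Consequently the short exact sequence $0\to\ker G\to CFK^\infty(K)\xrightarrow{G}\F[U,U^{-1}]\to 0$ is a filtered short exact sequence split by $F$, so $CFK^\infty(K)\simeq\F[U,U^{-1}]\oplus\ker G$ as filtered complexes, and $\ker G$ is acyclic because $G$ is a quasi-isomorphism. This is the slice case with $A=\ker G$.

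The step I expect to be the real obstacle is the input used in the second paragraph: one needs the cobordism-map formalism for knot Floer homology at the level of the full filtered complex $CFK^\infty$ --- respecting both the Alexander and the algebraic filtrations as well as the Maslov grading, functorial under stacking of concordances, and equal to the identity for a product concordance --- which rests on the work of Juh\'asz and of Zemke on sutured and decorated link cobordism maps; arranging the decorations and orientations so that the reversed concordance genuinely produces a one-sided inverse also requires care. Everything else --- the reduction to the slice case, the connected-sum and duality formulas, and the homological algebra of splitting and of tensoring with acyclic complexes --- is routine. If one prefers to avoid this machinery, one must instead supply some other four-dimensional input forcing $CFK^\infty$ of a slice knot to be stably trivial, and producing such an input is precisely where the difficulty lies.
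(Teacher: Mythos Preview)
The paper does not prove this theorem: it is quoted as a result of Hom from \cite{HFKsurvey} and closed with \QEDB, so there is no proof in the paper to compare against.

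Your argument is correct. The reduction to the slice case via the connected-sum and duality formulas, followed by the tensor-and-cancel bootstrap, is exactly the standard skeleton, and your splitting argument for the slice case via concordance maps is valid. The only point worth flagging is the one you already flag yourself: you are invoking the Juh\'asz--Zemke cobordism-map package on the full filtered complex $CFK^\infty$, which is correct but heavy, and in fact largely postdates the reference \cite{HFKsurvey}. Hom's original treatment of the slice case is lighter and more internal to the complex: one works with a vertically and horizontally simplified basis and uses the constraints that sliceness places on the filtered structure (via $\widehat{HF}$ of large surgeries, equivalently the vanishing of invariants such as $\tau$, $\nu$, $\epsilon$) to locate a single generator $x_0$ at the origin that is closed, not a boundary, and has no arrows in or out in the simplified basis, so that $\F[U,U^{-1}]\cdot x_0$ splits off as a filtered direct summand. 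The four-dimensional input there is only the slice-genus bounds on these numerical invariants, not a full functoriality theorem. Your route is conceptually cleaner and generalises immediately to the involutive setting (Theorem~\ref{involutive2} just below), at the price of heavier prerequisites.
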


In \cite{involutive1} Hendricks and Manolescu show that the knot Floer complex $CFK^\infty(K)$ naturally comes with an order four automorphism $\iota_K$ squaring to the Sarkar map \cite{SarkarBasepoint}. In the same paper they prove that the filtered chain homotopy type of the pair $CFKI^\infty(K)=(CFK^\infty(K), \iota_K)$ is an invariant of $K$. In fact, in \cite{involutive2} Hom and Hendricks prove that for $CFKI^\infty$ an analogue of Theorem \ref{jen} holds.   

\begin{thm}[Hendricks \& Hom \cite{involutive2}]\label{involutive2} If two knots $K_1$ and $K_2$ are concordant then there exists $\Z$-graded, $(\Z \oplus \Z)$-filtered, acyclic chain complexes $A_1$ and $A_2$ together with involutions $\iota_{A_1}$ and $\iota_{A_2}$ such that $(CFK^\infty (K_1), \iota_{K_1}) \oplus (A_1,\iota_{A_1}) \simeq (CFK^\infty(K_2), \iota_{K_2}) \oplus (A_2,\iota_{A_2} )$. \QEDB
\end{thm}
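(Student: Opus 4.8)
This is the involutive refinement of Hom's Theorem \ref{jen}, so the plan is to reproduce the proof of Theorem \ref{jen} while carrying the conjugation symmetry along at every step. The starting observation is that a concordance from $K_1$ to $K_2$ determines, through Zemke's graph cobordism maps on $(\Z\oplus\Z)$-filtered knot Floer homology, a pair of grading-preserving filtered chain maps $F\colon CFK^\infty(K_1)\to CFK^\infty(K_2)$ and $G\colon CFK^\infty(K_2)\to CFK^\infty(K_1)$ that are mutually inverse on the $U$-localized homology $\F[U,U^{-1}]$ --- this is precisely the input from which the acyclic complements $A_1,A_2$ of Theorem \ref{jen} are manufactured. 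The new ingredient I would add is Zemke's naturality statement for the conjugation symmetry: cobordism maps commute with the involutions $\iota_{K_i}$ up to filtered chain homotopy, so $F$ and $G$ may be taken $\iota$-equivariant up to homotopy. I would then feed $(F,G)$ together with these equivariance homotopies into the same mapping-cone and cancellation procedure that proves Theorem \ref{jen}, checking that every step can be performed compatibly with the involutions; the resulting complements $A_1,A_2$ then come with involutions $\iota_{A_1},\iota_{A_2}$ realizing the claimed equivalence of pairs.

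The algebraic skeleton is the same as in the non-involutive case and is perhaps clearest phrased through connected sums: $K_1\#(-K_2)$ is slice, a (punctured) slice disk for a slice knot $J$ gives an $\iota$-equivariant local equivalence between $CFKI^\infty(J)$ and the trivial pair $(\F[U,U^{-1}],\mathrm{id})$, and the involutive connected sum formula of \cite{zemke} --- $CFKI^\infty(K_1\#K_2)\simeq CFKI^\infty(K_1)\otimes CFKI^\infty(K_2)$ up to the Sarkar correction --- together with an involutive mirroring formula for $CFKI^\infty(-K)$ lets one rewrite $CFKI^\infty(K_1)\otimes CFKI^\infty(K_2)^{\ast}\simeq(\F[U,U^{-1}],\mathrm{id})\oplus(A,\iota_A)$ with $(A,\iota_A)$ acyclic. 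Tensoring both sides with $CFKI^\infty(K_2)$ and simplifying the dual factor $CFKI^\infty(K_2)^{\ast}\otimes CFKI^\infty(K_2)$ in the same way cancels it and leaves $CFKI^\infty(K_1)\oplus(\text{acyclic})\simeq CFKI^\infty(K_2)\oplus(\text{acyclic})$, since tensoring an acyclic complex of free $\F[U,U^{-1}]$-modules with any complex is again acyclic.

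I expect the main obstacle to be the Sarkar correction term. Because $\iota_K^{2}$ is the Sarkar basepoint-moving map and not the identity, the connected-sum and stabilization formulas for $\iota_K$ hold only up to explicit filtered chain homotopies, and one must verify that the whole chain of tensor, mapping-cone, and cancellation moves above remains compatible with the involutions through all of these corrections. This is exactly the place where Zemke's precise statements on the interaction of the conjugation symmetry with connected sums, stabilizations, and cobordism maps are needed; granting those, the rest is routine homological algebra identical to the proof of Theorem \ref{jen}.
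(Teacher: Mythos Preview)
The paper does not prove this statement. Theorem \ref{involutive2} is quoted as a result of Hendricks and Hom \cite{involutive2} and closed with the empty-box symbol \ensuremath{\square}, indicating that no proof is supplied here; the authors simply cite it as background, exactly as they do for Theorem \ref{jen} immediately above. There is therefore no proof in the paper against which your proposal can be compared.

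Your sketch is a reasonable outline of the argument in the original reference: the Hendricks--Hom proof does proceed by upgrading the local-equivalence relation underlying Theorem \ref{jen} to an $\iota$-equivariant one, using Zemke's cobordism maps and their compatibility with the conjugation action, together with the involutive connected-sum formula of \cite{zemke}. Your identification of the Sarkar correction as the main technical point is accurate. But since this paper treats the theorem as a black box, the appropriate response here is simply to cite \cite{involutive2} rather than to reproduce a proof.
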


We now prove that Equation \eqref{HomDependence} holds in the remaining cases of Lemma \ref{sums}.

\begin{proof}[Proof of Proposition \ref{failure}]With the same notation as in the proof of Theorem \ref{uno} we have that 
$CFK^\infty (T_{3k+1,3})=C_*(1,2, \dots ,1 ,2, 2, 1 , \dots , 2, 1),$ $CFK^\infty (T_{3,2})=C_*(1,1),$
$CFK^\infty (T_{3k+2,3})=C_*(1,2, \dots ,1, 2,1,1, 2, 1 , \dots , 2, 1)$. 
Denote by $z_0, \dots , z_{2q}$ the generators of the staircase chain complex of $CFK^\infty (T_{3k+1,3})$ so that $M(z_{2i})=0$, $M(z_{2i+1})=1$, $\partial z_{2i}=0$, and $\partial z_{2i+1}=z_{2i}+z_{2i+2}$. Similarly, denote by $a, b$ and $c$ the generators of $CFK^\infty(T_{3,2})$ so that $M(a)=M(b)=0$, $M(c)=1$, $\partial a = \partial b =0$, and $\partial c = a +b$. Set 
\[z'_i=
\begin{cases}
 \ a \otimes z_i \ \ \ \ \  \text{ if } i=0, \dots , q\\ 
 \ c \otimes z_q \ \ \ \ \  \text{ if } i=q+1 \\ 
 \  b \otimes z_{i-2} \ \ \  \text{ if } i=q+2, \dots , 2q+2
\end{cases} 
\ \ \ 
\begin{cases}
\alpha_i= c \otimes z_{2i+1}  \\ 
\beta_i= a \otimes z_{2i+1} + c \otimes z_{2i} \\ 
\gamma_i= b \otimes z_{2i+1} + c \otimes z_{2i+2} \\ 
\epsilon_i= b \otimes z_{2i} + a \otimes z_{2i+2} \\
\end{cases} \ .
\]
and notice that  
\begin{itemize}
\item $CFK^\infty(T_{3q+1, 3})= \text{Span}_{\Z_2[U, U^{-1}]}\langle z'_0, \dots , z'_{2q+2} \rangle  \oplus A_*$ where
\[A_*= \bigoplus_{i=1}^q \text{Span}_{\Z_2[U, U^{-1}]}\langle \alpha_i, \beta_i, \gamma_i, \epsilon_i \rangle  \ ,\]
\item $\partial \alpha_i= \beta_i + \gamma_i$, $\partial \beta_i= \partial \gamma_i= \epsilon_i$, $\partial \epsilon_i=0$, and consequently $A_*$ is acyclic (being sum of acyclic complexes), 
\item $M(z_{2i})=0$, $M(z_{2i+1})=1$, $\partial z_{2i}=0$, and $\partial z_{2i+1}=z_{2i}+z_{2i+2}$
which means that $\text{Span}_{\Z_2[U, U^{-1}]}\langle z'_0, \dots , z'_{2q+2} \rangle $ is a staircase complex. 
In fact, a careful check of the Alexander and the algebraic filtrations shows that 
\[CFK^\infty(T_{3q+2,3}) = \text{Span}_{\Z_2[U, U^{-1}]}\langle z'_0, \dots , z'_{2q+2} \rangle \ .\]
\end{itemize}
Summing up we get that $CFK^\infty(T_{3q+1, 3})\otimes CFK^\infty(T_{3,2})= CFK^\infty(T_{3q+2,3}) \oplus A_*$ with $A_*$ acyclic.
Notice that this can also be seen as a consequence of \cite[Lemma 3.18]{StaircaseDependence}. 
To prove the corresponding statement for $CFKI^\infty$ we need to check that
\begin{enumerate}
\item  the involution of $CFKI^\infty(T_{3q+1, 3})\widetilde{\otimes} CFKI^\infty(T_{3,2})$ restricts  to $\iota_{T_{3q+2,3}}$ on the sub complex spanned by $z'_0, \dots , z'_{2q+2}$, 
\item $\iota_{T_{3q+1, 3}} \times \iota_{T_{3, 2}} $leaves $A_*$ invariant. 
\end{enumerate}
Here $CFKI^\infty(T_{3q+1, 3})\widetilde{\otimes} CFKI^\infty(T_{3,2}) = (CFK^\infty(T_{3q+1, 3})\otimes CFK^\infty(T_{3,2}) , \iota_{T_{3q+1, 3}} \times \iota_{T_{3, 2}} )$ denotes the product introduced by Zemke in \cite{zemke}. 
We will adopt Zemke's notation for the rest of this proof. According to \cite[Section 7]{involutive1} the knot involution of a $(p,q)$ torus knot acts on the associated staircase complex as a reflection about the $x=y$ axis. Thus,
\begin{align*}
\iota_{T_{3q+1, 3}} \times \iota_{T_{3, 2}}(z'_i)&=
\iota_{T_{3q+1, 3}} \otimes \iota_{T_{3, 2}} z'_i
+ U^{-1}  (\phi_{T_{3q+1, 3}} \otimes  \psi_{T_{3, 2}})  
\circ (\iota_{T_{3q+1, 3}} \otimes \iota_{T_{3, 2}}) z'_i\\
&= b \otimes z_{2q-i}+  U^{-1}  (\phi_{T_{3q+1, 3}} \otimes  \psi_{T_{3, 2}}) b \otimes z_{2q+2-i} \\
&=z'_{2q+2-i} 
\end{align*}
where the third identity is due to the fact that $\phi_{T_{3, 2}}$ vanishes on $b$. Similarily (using the fact that 
$\phi_{T_{3, 2}}$ vanish on $a$ and $\psi_{T_{3q+1, 3}}$ does so on $z_q$) one proves that 
$\iota_{T_{3q+1, 3}} \times \iota_{T_{3, 2}}(z'_{q+1})= z'_{q+1}$, and $\iota_{T_{3q+1, 3}} \times \iota_{T_{3, 2}}(z'_{i})= z'_{i-2q-2}$ for $i=q+2, \dots , 2q+2$ 
leading to a proof of $(1)$. The proof of $(2)$ is an analogous computation.
\end{proof}

\section{Obstructions from the Owens-Strle theorem} \label{sectionthree}
In this section we deal with the second family of Lemma \ref{sums}. We start by discussing an example in full detail, then we proceed with the necessary computations for the whole family. The main goal of this section is to prove Theorem \ref{firstfamily}.

\subsection{A first example}
It follows from Proposition \ref{upsilon} that the knot $K=T_{5,3} \# -T_{4,3}$ is upsilon-alternating. In order to prove that $T_{5,3} \# -T_{4,3}$ is not concordant to an alternating knot  we will make use of the following result by Owens and Strle \cite{OwensStrle}. 

\begin{thm}[Owens \& Strle]\label{OwSt}
 Let $Y$ be a rational homology sphere with $|H_1(Y;\Z)|=\delta$. If $Y$ bounds a negative-definite four-manifold $X$ and either $\delta$ is square-free or there is no torsion in $H_1(X; \Z)$ then
 \[\max _{\s \in \Spinc(Y)} 4d(Y,\s)\geq \begin{cases}
                                      \ 1-1/\delta \text{ if } \delta \text{ is odd,}\\
                                      \ 1 \text{ if } \delta \text{ is even}
                                     \end{cases}\ . \]
The inequality is strict unless the intersection form of $X$ is $(n-1)\langle-1\rangle\oplus \langle \delta\rangle$. Moreover, the two sides of the inequality are congruent modulo $4/\delta$. \QEDB
\end{thm}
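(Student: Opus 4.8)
The statement to be proved is the Owens--Strle theorem itself, so the plan is to reconstruct their argument, which rests on two pillars: a Heegaard Floer inequality for negative-definite fillings, and a sharp lattice-theoretic estimate for characteristic covectors. The first pillar is the Ozsv\'ath--Szab\'o inequality \cite{OS24}: if $X$ is a negative-definite four-manifold with $\partial X = Y$ a rational homology sphere, then for every $\s \in \Spinc(X)$ one has $c_1(\s)^2 + b_2(X) \leq 4d(Y, \s|_Y)$. Since $b_2(X)=n$ and every restriction $\s|_Y$ is a $\Spinc$ structure on $Y$, taking the maximum gives
\[ \max_{\mathfrak{t} \in \Spinc(Y)} 4d(Y,\mathfrak{t}) \;\geq\; n + \max_{\s \in \Spinc(X)} c_1(\s)^2 . \]
This converts the theorem into a question about the intersection lattice $L = (H_2(X;\Z)/\mathrm{tors}, Q_X)$: it suffices to exhibit a characteristic covector $c$, the first Chern class of some $\s$, with $c^2 \geq (1-1/\delta)-n$ for $\delta$ odd and $c^2 \geq 1-n$ for $\delta$ even.

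Here the hypotheses enter. When $H_1(X;\Z)$ is torsion-free, or when $\delta$ is square-free (so that no torsion can contribute a square factor to the order of $H_1(Y)$), one has $|\det Q_X| = |H_1(Y;\Z)| = \delta$. Thus $L$ is a negative-definite lattice of rank $n$ with $|L^*/L| = \delta$. Passing to the positive-definite form $-Q_X$, the characteristic covectors form a single coset of $2L^*$ inside $L^*$, and $-c^2$ is precisely the norm of $c$ measured in this coset. The remaining task is therefore the extremal estimate: the minimal norm among characteristic covectors of $L$ is at most $n-1+1/\delta$ when $\delta$ is odd and at most $n-1$ when $\delta$ is even.

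I would establish this estimate by an Elkies-type argument \cite{OwensStrle}, bounding the shortest vector in the coset of $2L^*$ by an averaging/reduction-theory computation over the finite abelian group $L^*/L$ together with its discriminant quadratic form, which distinguishes the odd and even regimes through the parity of $\delta$. The equality clause then follows by tracking when this averaging bound is sharp: tightness forces the discriminant form to be as simple as possible, which pins down $L \cong (n-1)\langle -1\rangle \oplus \langle \delta\rangle$, exactly as Elkies' characterization pins down the diagonal lattice in the unimodular case. Finally, the congruence modulo $4/\delta$ is formal: the squares $c_1(\s)^2$ of characteristic covectors, which differ by elements of $2L^*$, all lie in a single coset of $\tfrac{4}{\delta}\Z$, and comparing this coset with the value $1-1/\delta$ (respectively $1$) yields the stated congruence.

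The main obstacle is the sharp lattice estimate and, more delicately, its equality characterization: one must not only bound the minimal characteristic norm but also identify the unique extremal lattice, which requires controlling the discriminant form and ruling out all non-diagonal configurations. The Heegaard Floer input functions as a black box from \cite{OS24}, so essentially all the work is in this arithmetic of definite lattices, where separating the odd and even $\delta$ regimes and securing the rigidity statement are the genuinely hard points.
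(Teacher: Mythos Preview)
The paper does not prove this theorem: the statement is quoted from \cite{OwensStrle} and closed with a \QEDB, signalling that it is used as a black box. There is therefore no ``paper's own proof'' to compare your attempt against; the authors only \emph{apply} the result (via Lemma~\ref{criterion}) to obstruct concordance to alternating knots.

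That said, your sketch does correctly identify the architecture of the Owens--Strle argument: the Ozsv\'ath--Szab\'o inequality $c_1(\s)^2+b_2(X)\le 4d(Y,\s|_Y)$ reduces the statement to a lattice-theoretic bound on the shortest characteristic covector of a definite form of rank $n$ and determinant $\delta$, and the hypothesis (square-free $\delta$ or torsion-free $H_1(X)$) is precisely what forces $|\det Q_X|=\delta$. The core of the actual proof is the Elkies-type theorem you allude to, which in \cite{OwensStrle} is established via theta functions and modularity rather than a direct averaging over $L^*/L$; your description of that step is more of a placeholder than an argument. Your treatment of the congruence is also a bit loose: it is not merely that characteristic squares lie in one coset of $\tfrac{4}{\delta}\Z$, but that $4d(Y,\s)$ itself is congruent to $c_1(\s)^2+n$ modulo an appropriate multiple, which requires the sharper form of the Ozsv\'ath--Szab\'o relation rather than just the inequality. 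For the purposes of this paper, though, none of this matters: the theorem is imported wholesale, and you would be expected to cite it rather than reprove it.
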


More precisely, we will need the following lemma. 

\begin{lem} \label{criterion} Let $K \subset S^3$ be a knot with $\delta= |\det (K)|$ square-free. Set
\[ d_{\max} (K)= \max _{\s \in \Spinc(\Sigma_2(K))} 4d(\Sigma_2(K),\s) \ , \ \   d_{\min} (K)= \min _{\s \in \Spinc(\Sigma_2(K))} 4d(\Sigma_2(K),\s) \ .\]
If $K$ is concordant to an alternating knot then 
\[ d_{\max} (K) \geq 1- 1/\delta  \ \ \text{ and } \ \ 1/\delta -1 \geq d_{\min}(K) \ .\]
\end{lem}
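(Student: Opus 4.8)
The strategy is to reduce Lemma \ref{criterion} to a direct application of the Owens--Strle theorem (Theorem \ref{OwSt}) applied to the double branched cover $\Sigma_2(K)$ and its mirror. Recall that if $K$ is alternating then, by a theorem of Ozsv\'ath and Szab\'o, $\Sigma_2(K)$ bounds a sharp negative-definite four-manifold, and moreover the same holds for $\Sigma_2(\overline{K}) = -\Sigma_2(K)$ (the orientation-reversal, since the double branched cover of the mirror is $-\Sigma_2(K)$). The determinant $\det(K)$ equals $|H_1(\Sigma_2(K);\Z)| = \delta$, which we have assumed to be square-free; in particular the square-free hypothesis in Theorem \ref{OwSt} is met automatically, so the conclusion of Theorem \ref{OwSt} holds for any negative-definite filling of $\Sigma_2(K)$ or of $-\Sigma_2(K)$, irrespective of torsion in $H_1$ of the filling.

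First I would recall the basic fact that for a knot $J$ concordant to $K$, the manifolds $\Sigma_2(J)$ and $\Sigma_2(K)$ are $\Z/2$-homology cobordant via a cobordism that respects the $\Spinc$ structures, so $d_{\max}$ and $d_{\min}$ of $\Sigma_2(\cdot)$ are concordance invariants of the knot (this is Manolescu--Owens's observation underlying $\delta$-type invariants). Hence it suffices to prove the two displayed inequalities under the stronger hypothesis that $K$ is itself alternating. Second, for an alternating knot $K$, Ozsv\'ath--Szab\'o's computation of the Floer homology of double branched covers shows $\Sigma_2(K)$ bounds a negative-definite $X$; apply Theorem \ref{OwSt} to $(\Sigma_2(K), X)$ to obtain $d_{\max}(K) = \max_{\s} 4d(\Sigma_2(K),\s) \geq 1 - 1/\delta$ (using that $\delta$ is odd here — note $\det(K)$ of a knot is always odd, so this is the relevant case of Theorem \ref{OwSt}). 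Third, apply the same argument to the mirror $\overline{K}$, which is also alternating and has $\Sigma_2(\overline{K}) = -\Sigma_2(K)$: this yields $\max_{\s} 4d(-\Sigma_2(K),\s) \geq 1 - 1/\delta$. Finally, use the standard identity $d(-Y,\s) = -d(Y,\s)$ to rewrite $\max_{\s} 4d(-\Sigma_2(K),\s) = -\min_{\s} 4d(\Sigma_2(K),\s) = -d_{\min}(K)$, so the second inequality becomes $-d_{\min}(K) \geq 1 - 1/\delta$, i.e.\ $1/\delta - 1 \geq d_{\min}(K)$, as claimed.

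The main point to be careful about — the only real content beyond bookkeeping — is justifying that $\Sigma_2(K)$ of an alternating knot bounds a negative-definite four-manifold, and that this property is inherited under taking mirrors. This follows from the structure of the Goeritz form of an alternating diagram: a reduced alternating diagram gives a definite Goeritz matrix, and a suitable checkerboard surface pushed into the four-ball produces a negative-definite (or, for the other coloring, positive-definite) filling of $\Sigma_2(K)$; passing to the mirror swaps the two checkerboard colorings and hence the two definite fillings, so $-\Sigma_2(K)$ also bounds a negative-definite manifold. I would also note explicitly that the square-free hypothesis on $\delta$ is exactly what allows Theorem \ref{OwSt} to apply without worrying about torsion in $H_1(X;\Z)$, which is why it appears in the statement of Lemma \ref{criterion}. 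With these ingredients in place the proof is a two-line deduction, so I do not anticipate any genuine obstacle; the work is entirely in setting up the correct signs and the mirror symmetry.
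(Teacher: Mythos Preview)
Your proposal is correct and follows essentially the same route as the paper. The only organizational difference is that the paper glues the rational homology cobordism $W$ (the branched double cover of the concordance) directly onto the definite checkerboard fillings of $\Sigma_2(J)$ to produce definite fillings $X^\pm$ of $\Sigma_2(K)$, and then applies Theorem~\ref{OwSt} to $(X^-,\Sigma_2(K))$ and $(-X^+,-\Sigma_2(K))$; you instead first invoke concordance invariance of the $d$-invariants to reduce to the case where $K$ is itself alternating, and then apply Theorem~\ref{OwSt} to $\Sigma_2(K)$ and to its mirror. These are two phrasings of the same argument, and all the key ingredients (checkerboard/Goeritz definite fillings for alternating knots, oddness of $\det(K)$, the identity $d(-Y,\mathfrak{s})=-d(Y,\mathfrak{s})$, and the square-free hypothesis to sidestep the torsion condition in Theorem~\ref{OwSt}) appear in both.
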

\begin{proof}
Suppose that $K$ is concordant to an alternating knot $J$. Let $W$ be the double cover of $S^3\times I$ branched along a concordance between $K$ and $J$. 
It is well known that $W$ is a rational homology cobordism between $\Sigma_2(K)$ and $\Sigma_2(J)$. 
By taking the double cover of the four-ball branched along pushed-in copies of the black and the white surface of an alternating diagram of $J$ 
we obtain simply connected definite four-manifolds bounding $\Sigma_2(J)$. By gluing these simply connected definite pieces to $W$ along $\Sigma_2(J)$ 
we obtain a positive-definite filling $X^+$ and a negative-definite filling $X^-$ of $\Sigma_2(K)$. Since $\delta =|\det(K)|=|H_1(\Sigma_2(K);\Z)|$ is a square-free odd number,
we can apply Theorem \ref{OwSt} to the pairs $(X,Y)=(X^-,\Sigma_2(K))$ and $(X,Y)=(-X^+,-\Sigma_2(K))$, and obtain the claimed inequalities.
\end{proof}

\begin{prop}\label{casofacile}
 The knot $T_{5,3}\# -T_{4,3}$ is not concordant to an alternating knot.
\end{prop}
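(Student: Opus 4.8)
The plan is to apply Lemma~\ref{criterion} to $K = T_{5,3}\#-T_{4,3}$, so the first step is to check that $\delta=|\det(K)|$ is square-free and odd. Since the determinant is multiplicative under connected sum and unchanged by mirroring, and since $\det(T_{5,3})=|\Delta_{T_{5,3}}(-1)|=1$ (the knot $T_{5,3}=10_{124}$ has trivial determinant) while $\det(T_{4,3})=3$ (here $T_{4,3}=8_{19}$), we get $\delta=3$, which is square-free and odd. Thus Lemma~\ref{criterion} applies: if $K$ were concordant to an alternating knot, then $d_{\max}(K)\geq 1-\tfrac13=\tfrac23$ and $d_{\min}(K)\leq \tfrac13-1=-\tfrac23$.

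Next I would compute the correction terms of $\Sigma_2(K)$. Because $\Sigma_2(K)=\Sigma_2(T_{5,3})\,\#\,-\Sigma_2(T_{4,3})$, because $\det(T_{5,3})=1$ forces $\Sigma_2(T_{5,3})$ to be an integral homology sphere --- namely the Poincar\'e sphere $\pm\Sigma(2,3,5)$ --- and because $\Sigma_2(T_{4,3})$ is a Seifert fibered rational homology sphere with $|H_1|=3$, additivity of the $d$-invariant under connected sum shows that the multiset of correction terms of $\Sigma_2(K)$ equals
\[ \bigl\{\, d(\Sigma_2(T_{5,3}))-v \ :\ v\in D \,\bigr\},\qquad D=\bigl\{\, d(\Sigma_2(T_{4,3}),\s)\ :\ \s\in\Spinc(\Sigma_2(T_{4,3}))\,\bigr\}. \]
It then remains to compute the single number $d(\Sigma_2(T_{5,3}))=\pm2$ and the three elements of $D$; all of these are obtained from the standard plumbing presentations of these Brieskorn-type manifolds together with the Ozsv\'ath--Szab\'o algorithm for correction terms of plumbed three-manifolds (or are found in the literature), cf.\ \cite{OS24}. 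Keeping track of orientations --- the sign of $d(\Sigma_2(T_{5,3}))$, and the identification of $\Sigma_2(T_{4,3})$ with the correctly oriented small Seifert fibered space --- is the step that I expect to require the most care.

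Finally, with $d(\Sigma_2(T_{5,3}))$ and $D$ in hand, I would set $d_{\max}(K)=4\bigl(d(\Sigma_2(T_{5,3}))-\min D\bigr)$ and $d_{\min}(K)=4\bigl(d(\Sigma_2(T_{5,3}))-\max D\bigr)$ and observe that at least one of the two inequalities of Lemma~\ref{criterion} fails. Indeed, the correction terms of a rational homology sphere with $|H_1|=3$ are too small in absolute value to compensate the contribution $4\,|d(\Sigma_2(T_{5,3}))|=8$, so one of $d_{\max}(K)$, $d_{\min}(K)$ is pushed onto the wrong side of $\pm\tfrac23$. This contradicts Lemma~\ref{criterion}, and therefore $K=T_{5,3}\#-T_{4,3}$ is not concordant to an alternating knot.
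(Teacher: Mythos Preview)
Your plan matches the paper's proof exactly: compute $\delta=3$, invoke Lemma~\ref{criterion}, split $\Sigma_2(K)=\Sigma(2,3,5)\,\#\,-\Sigma(2,3,4)$, and use the known values $d(\Sigma(2,3,5))=2$ and $d(\Sigma(2,3,4),\cdot)\in\{3/2,1/6,1/6\}$ from \cite{OSGraphManifolds} to see that every correction term of $\Sigma_2(K)$ is positive, so the inequality $d_{\min}(K)\leq -2/3$ fails. One caution: your closing heuristic that ``the correction terms of a rational homology sphere with $|H_1|=3$ are too small in absolute value'' is not a valid general principle (connect-summing with integer homology spheres makes such correction terms arbitrarily large), so the explicit computation you outline is genuinely required and cannot be replaced by that shortcut.
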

\begin{proof}
Set $K=T_{5,3}\# -T_{4,3}$. Since $3=\det(K)=|H_1(\Sigma_2(K);\Z)|$ is square-free, as a consequence of Lemma \ref{criterion} we have that
 \begin{equation}\label{OwStOb}
d_{\max} (K) \geq 2/3 \ \ \ \text{ and } \ \ \  -2/3 \geq d_{\min}(K) 
 \end{equation}
 We conclude by showing that one of these inequalities does not hold. Notice that $\Sigma_2(K)=\Sigma(2,3,5)\sharp -\Sigma(2,3,4)$ has three $\Spinc$ structures. 
 These are obtained by taking the sum of the spin structure of the Poincar\'e sphere $\Sigma(2,3,5)$ with the three $\Spinc$ structures $\{\s,\mathfrak{t},\overline{\mathfrak{t}}\}$ of $-\Sigma(2,3,4)$. 
The Brieskorn spheres $\Sigma(2,3,5)$ and $\Sigma(2,3,4)$ are respectively the boundaries of the negative-definite $E_8$ and $E_6$ plumbings. 
The $d$-invariants of these graph manifolds where computed in \cite{OSGraphManifolds}, we have
 \[d(\Sigma(2,3,5))=2, \  d(\Sigma(2,3,4),\s)=\frac{3}{2}, \text{ and } \ 
 d(\Sigma(2,3,4),\mathfrak{t})=d(\Sigma(2,3,4),\overline{\mathfrak{t}})=\frac{1}{6} \ . \]
Thus, the $d$-invariants of $\Sigma_2(K)$ are $\{11/6,5/3,5/3\}$ and we can conclude that $d_{\max} (K) = 44/6$ and $ d_{\min}(K) = 20/3$. This  contradicts  the second inequality in \eqref{OwStOb} and proves the claim.
\end{proof}

\subsection{The family $T_{6c-1,3}\# -T_{6c-2,3}$}
In this subsection we prove Proposition \ref{firstfamily}. We do this by generalising the argument used for $T_{5,3}\# -T_{4,3}$ to the knots $K_c=T_{6c-1,3} \# -T_{6c-2,3}$. The double branched cover of each $K_c$ is a difference of two Brieskorn spheres, namely $\Sigma_2(K_c)=\Sigma(2,3,6c-1)\sharp -\Sigma(2,3,6c-2)$. 
Since $|det(K_c)|=3$ the branched double covers $\Sigma_2(K_c)$ have three $\Spinc$ structures (one $\Spin$ and two conjugated $\Spinc$ structures). We will prove that the associated correction terms are $\{11/6,5/3,5/3\}$ independently from $c$. Then the same argument given in the previous section will lead to a contradiction with the inequalities of Lemma \ref{criterion}.

\begin{lem}
For $c\geq 1$ we have $d(\Sigma(2,3,6c-1))=2$. 
\end{lem}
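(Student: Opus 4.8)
The plan is to compute $d(\Sigma(2,3,6c-1))$ via the plumbing description of the Brieskorn sphere and the Ozsv\'ath--Szab\'o algorithm for correction terms of plumbed three-manifolds \cite{OSGraphManifolds}. The sphere $\Sigma(2,3,6c-1)$ is the boundary of a negative-definite plumbing tree along the Seifert invariants coming from $(2,3,6c-1)$; its unique $\Spin^c$ structure (it is an integer homology sphere) has $d$-invariant computable as $\max(K^2 + |V|)/4$ over characteristic vectors $K$ satisfying the Ozsv\'ath--Szab\'o bound at each vertex, where $|V|$ is the number of vertices. So the first step is to write down this plumbing graph explicitly as a function of $c$: a central vertex of some weight $-b_0$ with three legs corresponding to the multiplicities $2$, $3$, $6c-1$, with the leg weights determined by continued-fraction expansions.

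Rather than running the lattice-point algorithm directly, I expect it to be cleaner to use the surgery description $\Sigma(2,3,6c-1) = S^3_{-1/(6c-1)}(\text{trefoil})$, or more precisely to recognize $\Sigma(2,3,6c\pm1)$ as $\pm 1/n$ surgery on the left/right-handed trefoil, and then apply the mapping-cone formula for $d$-invariants of surgeries on knots \cite{OS24}. Since $CFK^\infty$ of the trefoil is the staircase $C_*(1,1)$, the truncated mapping cone is small and the resulting correction terms are governed by the $V_i$ and $H_i$ sequences of the trefoil, which are $V_0 = 0$, $V_i = 0$ for $i \ge 1$ (right-handed) — actually $V_0 = 1$, $V_i = 0$ for $i\ge 1$ for the left-handed trefoil and correspondingly for the right-handed one. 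One then reads off that $d(S^3_{1/n}(T_{2,3})) = -2V_0 = \dots$ and by a parallel computation $d(S^3_{-1/n}(T_{2,3})) = 2$ independent of $n$, i.e. of $c$. This will give $d(\Sigma(2,3,6c-1)) = 2$ for all $c \ge 1$, matching the base case $\Sigma(2,3,5)$ computed in the previous subsection.

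The main obstacle I anticipate is bookkeeping: pinning down the precise orientation conventions (which trefoil, which sign of surgery coefficient, and hence whether the relevant $V_0$ is $0$ or $1$) so that the sign of the $d$-invariant comes out as $+2$ rather than $-2$, and confirming that $\Sigma(2,3,6c-1)$ really is $-1/(6c-1)$-surgery (or the appropriate coefficient) on the right-handed trefoil with the orientation it inherits as a Brieskorn sphere. A clean way to sidestep this is to instead use the plumbing/negative-definite-filling viewpoint together with the observation that $\Sigma(2,3,6c-1)$ bounds a negative-definite plumbing with the same $E_8$-type ``core'' as $\Sigma(2,3,5)$ plus a chain of $(-2)$-spheres, and that extending this chain does not change the maximal value of $K^2 + |V|$ over the relevant characteristic covectors; I would carry out the induction on $c$ at this level, with the inductive step amounting to checking that adding one more $(-2)$-vertex to the long leg leaves the optimal characteristic vector (and its square-plus-rank) unchanged. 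Either route reduces to a short finite computation once the setup is fixed, so the real work is choosing the presentation that makes the $c$-independence transparent.
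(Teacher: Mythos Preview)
Your surgery-formula approach is exactly the route the paper takes: it identifies $\Sigma(2,3,6c-1)$ as $1/c$-surgery on the right-handed trefoil and then quotes the Ni--Wu formula $d(S^3_{1/c}(T_{3,2}))=2V_0(T_{3,2})$, with $V_0(T_{3,2})=1$ read off from the semigroup of the trefoil. So strategically you are on target.

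However, your surgery coefficient is wrong. By Moser's classification of surgeries on torus knots, $p/q$-surgery on $T_{2,3}$ produces a Seifert fibered space whose third multiplicity is $|p-6q|$; thus $1/c$-surgery gives multiplicity $|1-6c|=6c-1$, and the description is $\Sigma(2,3,6c-1)\cong S^3_{1/c}(T_{2,3})$ (up to orientation), \emph{not} $S^3_{-1/(6c-1)}(T_{2,3})$. The denominator of the surgery slope is $c$, not $6c-1$. This matters because the $c$-independence of the answer is then immediate from the Ni--Wu formula (only $V_0$ enters), whereas with your coefficient you would be varying the numerator rather than the denominator and the argument would not go through as written. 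Your stated values of $V_0$ are also garbled: for the right-handed trefoil $V_0=1$ and $V_i=0$ for $i\geq 1$; for its mirror all $V_i$ vanish. Once you fix the slope and the $V_0$ value, the orientation bookkeeping you flagged is the only remaining point, and it resolves to $d=+2$ exactly as in the base case $\Sigma(2,3,5)$. The alternative plumbing-induction you sketch would also work, but the paper reserves that machinery for the $\Sigma(2,3,6c-2)$ family, where the surgery shortcut is not available.
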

\begin{proof}This is well known. One way to carry out the computation is via the so called knot surgery formula.
Note that $\Sigma(2,3,6c-1)$ can be obtained as $1/c$-surgery along the trefoil knot. As explained in \cite{NiWu},  these correction terms can be computed via the formula $d(S^3_{1/c}(T_{3,2}))=2V_0(T_{3,2})$, where $V_0$ is the first in a sequence of concordance invariants $\{V_i\}_{i\geq 0}$ first introduced by Rasmussen in \cite{Ras1}. For torus knots (and more generally for algebraic knots) these invariants
can be computed combinatorially from the gap function of the semigroup \cite{LivingstonCuspidalCurves}. For the trefoil knot
one has $V_0=1$ and $V_i=0$ for $i\geq 1$.
\end{proof}

In order to compute the correction terms of the Brieskorn spheres $\Sigma(2,3,6c-2)$ we will make use of the algorithm introduced by Ozsv\' ath and Szab\' o in 
\cite{OSGraphManifolds} which we briefly recall. The manifolds 
$\Sigma(2,3,6c-2)$ can be described as the boundary of a negative-definite plumbing of spheres $X_\Gamma$ with associated  star-shaped, three-legged graph $\Gamma$. These particular graphs have at most one bad vertex in the sense of \cite[Definition 1.1]{OSGraphManifolds}. The correction terms of such a plumbed three-manifold $Y_{\Gamma}= \partial X_\Gamma$ can be computed according to the following formula \cite[Corollary 1.5]{OSGraphManifolds}
\begin{equation}\label{correctionterms}
 d(Y_{\Gamma},\s)=\max\frac{K^2+|\Gamma|}{4}\ ,
\end{equation}
where the maximum is taken over all characteristic vectors $K \in H^2(X_\Gamma,\Z)$ representing a $\Spinc$ structure restricting to $\s$. The algorithm given in \cite{OSGraphManifolds} describes how to find a characteristic vector $K$ which maximises the left hand-side of Equation \ref{correctionterms}.

Let $\Gamma$ be a negative-definite plumbing graph with at most one bad vertex. Recall that $K \in H^2(X_\Gamma, \Z)= \text{Hom} (H_2(X_\Gamma, \Z), \Z)$ is characteristic for the intersection pairing $Q_\Gamma$ of $X_\Gamma$ if 
\[\langle K , \alpha \rangle \equiv \alpha^2\text{ mod } 2\]
for every $\alpha \in H_2(X_\Gamma ; \Z)= \bigoplus_{v \in \Gamma} \Z \cdot  v $. 
We denote by $\Char$ the set of characteristic vectors of $Q_{\Gamma}$. We say that $K_0 \in \Char$ is \emph{admissible} if 
\[ m(v)+2 \leq \langle K_0, v \rangle \leq -m(v)\]
for every $v \in \Gamma$, where $m(v)$ denotes the weight of the vertex $v$. Given an admissible vector $K_0$ one can inductively construct a sequence $(K_0, K_1, \dots , K_n)$ in which a term $K_i$ is obtained from its predecessor $K_{i-1}$ by 
summing twice the Poincar\` e dual $PD(v)$ of a vertex $v$ of $\Gamma$ such that $\langle K , v \rangle = -m(v)$.
We will refer to the operation $K \mapsto K+ 2PD(v)$ as a \emph{flip move} at the vertex $v$. A sequence  $(K_0,  \dots , K_n)$ is said to terminate in a \emph{full-path} if one of the following holds
\begin{enumerate}
 \item $m(v) \leq \langle K_n, v \rangle \leq -m(v)-2$ for every vertex $v$,
 \item $ \langle K_n, v \rangle > -m(v)$ for some vertex $v$. 
\end{enumerate}
If the former holds we say that the full-path is \emph{good}, otherwise we say that it is \emph{bad}. 
It follows from \cite[Proposition 3.2 ]{OSGraphManifolds} that the maximum in Equation \ref{correctionterms} is achieved by an admissible characteristic vector representing the $\Spinc$ structure $\s$ and initiating a good full-path. 
In the following lemma  we collect some useful remarks that will be extensively used in the proof of Lemma \ref{famiglia1}. These remarks already appeared in the literature, see for example \cite{SomeBreiskorn}.
\begin{lem}\label{remarks} Let $\Gamma$ be a negative-definite plumbing graph.
\begin{enumerate}
\item If $K_0$ initiates a bad full-path then any other full-path starting with $K_0$ is bad.
\item If $\Gamma' \subset \Gamma$ is a connected subgraph and $K_0' \in \text{Char}(X_{\Gamma'})$ is a characteristic vector starting a bad full-path in $\Gamma'$ then any characteristic vector $K_0 \in \text{Char}(X_{\Gamma})$ restricting to $K_0'$ on $H_2(X_{\Gamma'}; \Z)$ starts a bad full-path on $\Gamma$.
\item Suppose that $\Gamma' \subset \Gamma$ is a connected subgraph whose vertices are all $-2$-weighted. Then for any good full-path $(K_0, \dots , K_n)$ we have that $\langle K, v \rangle = 2$ for at most one vertex $v \in \Gamma'$.
\end{enumerate}
\end{lem}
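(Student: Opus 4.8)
The plan is to extract all three assertions from the combinatorics of a single flip move. Since $\Gamma$ is negative-definite, every weight satisfies $m(v)\le -1$, so $-m(v)\ge 1$; a flip at $v$ is legal exactly when $\langle K,v\rangle=-m(v)$, and it changes $\langle K,v\rangle$ to $m(v)$, adds $2$ to $\langle K,w\rangle$ for each neighbour $w$ of $v$, and leaves all other values fixed. Three remarks follow immediately: (i) if $v$ and $w$ are non-adjacent and both flippable, then flipping one leaves the other flippable and the two flips commute; (ii) if $v\sim w$ are both flippable, then flipping $v$ sends $\langle K,w\rangle$ from $-m(w)$ to $-m(w)+2$, so the result is a terminal vector of type (2), a ``bad sink''; (iii) along a \emph{good} full path no two adjacent vertices are ever simultaneously flippable, since a vertex pinned at the value $-m(v)$ can leave that value only by being flipped (a neighbour-flip would overshoot it), hence must be flipped before the path ends, and then its still-flippable neighbour overshoots.

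For part (1) I would induct on the length $n$ of a good full path $(K_0,\dots,K_n)$, showing that $K_0$ admits no bad full path. The case $n=0$ is clear because the good-sink and bad-sink conditions are mutually exclusive. For the step, given a bad full path $B=(K_0,L_1,\dots)$, compare the first flip $v$ of the good path with the first flip $w$ of $B$: if $v=w$ recurse on the common successor; if $v\ne w$ they are non-adjacent by (iii), and using (i) I would slide the flip $v$ to the front of $B$ --- start from $G_1=K_0+2PD(v)$, flip $w$, and replay the remaining moves of $B$ carrying the extra summand $2PD(v)$. A routine check (each replayed move stays legal, or overshoots a neighbour of $v$, or coincides with a later vector of $B$ once $B$ itself flips $v$; and the bad sink terminating $B$, perturbed by $2PD(v)$, is still a bad sink) shows that $G_1$ initiates a bad full path, contradicting the inductive hypothesis applied to $(G_1,\dots,K_n)$. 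This diamond-chasing is the one genuinely bookkeeping-heavy step of the lemma; it is essentially the argument already present in \cite{OSGraphManifolds,SomeBreiskorn}.

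For part (2) I would lift a bad full path of $\Gamma'$ to $\Gamma$. As $\Gamma'$ is a subgraph, $Q_\Gamma$ restricts to $Q_{\Gamma'}$ on the $\Gamma'$-vertices, so if the $\Gamma'$-path flips $v_1,\dots,v_n$ then the same sequence of flips on $\Gamma$ started from $K_0$ is legal at every stage --- the legality of the $t$-th flip being the identity $\langle K_{t-1},v_t\rangle=\langle K_{t-1}',v_t\rangle=-m(v_t)$ --- and each lifted vector $K_t$ restricts to $K_t'$ on the $\Gamma'$-vertices. The lifted sequence cannot terminate of type (1) before stage $n$, since the next $\Gamma'$-flip is always available; so it runs until either a vertex outside $\Gamma'$ overshoots or, at stage $n$, the vertex of $\Gamma'$ witnessing badness of the $\Gamma'$-path overshoots. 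In both cases we have produced a bad full path from $K_0$, and part (1) upgrades this to: every full path from $K_0$ is bad.

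For part (3), a vertex of a $(-2)$-weighted $\Gamma'$ is flippable exactly when its value equals $-m(v)=2$, and I read the assertion as concerning the admissible starting vector $K_0$ of a good full path, so that $\langle K_0,v\rangle\in\{0,2\}$ for $v\in\Gamma'$. Suppose two such vertices carry the value $2$; pick a pair $u_0,u_d$ at minimal $\Gamma'$-distance $d$. By (iii) $d\ge 2$, and minimality forces the interior vertices $u_1,\dots,u_{d-1}$ of a $\Gamma'$-geodesic to carry the value $0$. Since $K_0$ is not a sink I may flip $u_0$, which raises $u_1$ to value $2$ and touches only $\Gamma'$-neighbours of $u_0$, giving a vector with $u_1,u_d$ at value $2$ at distance $d-1$ (or an overshoot at some other $\Gamma'$-neighbour of $u_0$, which is better still). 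Iterating this ``push'' toward $u_d$ along the geodesic, after at most $d-1$ flips one reaches a vector with two adjacent $\Gamma'$-vertices at value $2$, and one more flip overshoots --- a bad full path from $K_0$, contradicting part (1). Hence $K_0$ has $\langle K_0,v\rangle=2$ on at most one vertex of $\Gamma'$, which is precisely the constraint that prunes the list of admissible vectors to be tested in the proof of Lemma \ref{famiglia1}. Throughout, the genuine obstacle is the confluence argument of part (1); parts (2) and (3) are short reductions to it.
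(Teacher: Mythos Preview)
The paper does not prove this lemma: it merely collects the three remarks and refers to the literature (namely \cite{SomeBreiskorn}, and implicitly \cite{OSGraphManifolds}) for the argument. Your proposal therefore supplies precisely what the paper omits, and your strategy --- extracting everything from the local effect of a single flip, a confluence/diamond argument for (1), lifting along an induced subgraph for (2), and a push along a $\Gamma'$-geodesic for (3) --- is the standard one and is correct. Your reading of part (3) as a statement about the admissible initial vector $K_0$ matches how the lemma is actually invoked in the proof of Lemma~\ref{famiglia1}, and your use of admissibility to pin the $\Gamma'$-values to $\{0,2\}$ is exactly what makes the push well-defined. The only place your sketch is thin is the ``routine check'' in (1): when the replayed bad path first meets a flip at $v$ itself one has $L_{i-1}+2PD(v)=L_i$, so the replay simply rejoins $B$ and continues verbatim to its bad sink; one explicit sentence to this effect would make the argument self-contained.
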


\begin{lem}\label{famiglia1} For $c\geq 1$ the Brieskorn sphere $\Sigma(2,3,6c-2)$ has one spin structure $\s$ and two conjugated $\Spinc$ structures $\mathfrak{t}$ and $\overline{\mathfrak{t}}$. We have
\[
 d(\Sigma(2,3,6c-2),\mathfrak{s})=\frac{3}{2} \ ,  \text{ and }  \
 d(\Sigma(2,3,6c-2),\mathfrak{t})=d(\Sigma(2,3,6c-2),\overline{\mathfrak{t}})=\frac{1}{6} \ . \]
\end{lem}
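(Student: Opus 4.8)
The plan is to exhibit $\Sigma(2,3,6c-2)$ as the boundary of a negative-definite star-shaped plumbing $X_{\Gamma_c}$ with (at most) one bad vertex, and then to run the algorithm recalled above. So I would first read off $\Gamma_c$ from the Seifert data: a central vertex of valence three --- the only possible bad vertex --- two arms that do not depend on $c$ (coming from the exceptional fibres of orders $2$ and $3$, so that $\Gamma_1$ is the $E_6$-plumbing appearing in Proposition \ref{casofacile}), and a third arm that is essentially a chain of $(-2)$-vertices whose length grows linearly in $c$. Checking negative-definiteness, the bad-vertex count, and $|H_1(\Sigma(2,3,6c-2);\Z)|=3$ directly from the weights puts us in a position to apply Equation \ref{correctionterms} and the flip-move algorithm.

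Since $3$ is odd there is a unique spin structure $\s$ and a single conjugate pair $\mathfrak t,\overline{\mathfrak t}$, and as $d$ is conjugation-invariant it suffices to compute $d(\cdot,\s)$ and $d(\cdot,\mathfrak t)$. For each of these two classes I would exhibit the evident ``minimal'' admissible characteristic vector $K_0$ representing it --- generalising the vectors used for $E_6$, taking the smallest allowed value on each vertex of the long $(-2)$-arm, so in particular vanishing there except possibly at one vertex --- and verify that iterating the flip move from $K_0$ terminates in a \emph{good} full-path. The crucial bookkeeping is provided by Lemma \ref{remarks}(3): along the long chain of $(-2)$-vertices a good full-path can realise $\langle K,v\rangle=2$ at most once, which fixes the path on that arm and reduces the evaluation of $K_n^2+|\Gamma_c|$ to a bounded computation near the central vertex. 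The outcome should be $K_n^2+|\Gamma_c|=6$ for $\s$ and $K_n^2+|\Gamma_c|=2/3$ for $\mathfrak t$, independent of $c$, giving $d(\cdot,\s)=3/2$ and $d(\cdot,\mathfrak t)=d(\cdot,\overline{\mathfrak t})=1/6$, in agreement with the case $c=1$.

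The step I expect to be the real obstacle is \emph{maximality} --- ruling out, uniformly in $c$, any other admissible characteristic vector in the given $\Spinc$ structure that begins a good full-path with strictly larger square. I would handle this by induction on $c$, using the inclusion $\Gamma_c\subset\Gamma_{c+1}$ obtained by lengthening the long arm, together with Lemma \ref{remarks}(1)--(2): a competitor that deviates from the minimal pattern on the long arm either forces $\langle K,v\rangle=2$ at two distinct $(-2)$-vertices, which a good path cannot do by Lemma \ref{remarks}(3), or restricts to a bad full-path on a connected subgraph and is therefore itself bad by Lemma \ref{remarks}(2); what remains is a finite check on the two bounded arms, anchored by the base case $\Sigma(2,3,4)$ treated in Proposition \ref{casofacile}. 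Combining the base case with the induction identifies the vectors above as the maximisers, and Equation \ref{correctionterms} then yields the stated correction terms.
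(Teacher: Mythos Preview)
Your strategy is the same as the paper's: exhibit the negative-definite star-shaped plumbing, use Lemma \ref{remarks} to cut down the list of admissible characteristic vectors that can initiate good full-paths, and then evaluate Equation \eqref{correctionterms}. The reduction from general $c$ to a base case via the subgraph inclusion and Lemma \ref{remarks}(2) is exactly what the paper does (the paper phrases it as a direct restriction to a fixed subgraph rather than as a formal induction, but that is cosmetic).

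There is one concrete inaccuracy that would trip you up in the execution. For $c\geq 2$ the third arm is \emph{not} a chain of $(-2)$-vertices: reading outward from the trivalent centre it is $[-2,-3,-2,\dots,-2]$, with a single $(-3)$-weighted vertex in the second position and $c-2$ trailing $(-2)$'s. (Only the $E_6$ case $c=1$ has all weights equal to $-2$.) This matters: the admissible values on the $(-3)$-vertex are $\{-1,1,3\}$ rather than $\{0,2\}$, so the enumeration of candidate characteristic vectors and the flip-move bookkeeping are governed by this vertex, not just by Lemma \ref{remarks}(3) on a $(-2)$-chain. In particular your proposed inclusion $\Gamma_1\subset\Gamma_2$ does not arise by ``lengthening the long arm'', so $c=1$ is not a usable base for the induction. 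The paper accordingly takes $c=2$ as the base case, carries out the full-path analysis there in complete detail (including exhibiting the relevant bad full-paths explicitly, e.g.\ the one beginning at the vector with a $3$ on the $(-3)$-vertex), and then restricts to that subgraph for $c\geq 3$. Once you correct the description of the arm and shift the anchor to $c=2$, your outline matches the paper's proof.
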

\begin{proof}
Let us assume that  $c\geq 2$. We start by describing in full detail the case $c=2$. The Brieskorn sphere $\Sigma(2,3,10)$ can be described via the following negative-definite plumbing graph
 \[ 
\xygraph{
!{<0cm,0cm>;<1cm,0cm>:<0cm,1cm>::}
!~-{@{-}@[|(2.5)]}
!{(-1.2,1.5) }*+{\bullet}="x"
!{(-2.7,1.5) }*+{\bullet}="x1"
!{(-4.2,1.5) }*+{\bullet}="x2"
!{(1.5,3) }*+{\bullet}="a1"
!{(3,3) }*+{\bullet}="am"
!{(1.5,0) }*+{\bullet}="c1"
!{(3,0) }*+{\bullet}="cm"
!{(-1.2,1.9) }*+{-2}
!{(-2.7,1.9) }*+{-2}
!{(-4.2,1.9) }*+{-3}
!{(1.5,3.4) }*+{-2}
!{(1.5,0.4) }*+{-2}
!{(3,3.4) }*+{-2}
!{(3,0.4) }*+{-2}
"x"-"c1"
"x"-"a1"
"a1"-"am"
"c1"-"cm"
"x1"-"x"
"x2"-"x1"
} 
\]

We represent characteristic vectors by recording their value on the vertices of the plumbing graph.
For example the expression
$$
\left[
\begin{array}{lllll}
 &&&0&0\\
 1&0&0&&\\
 &&&0&0\\
\end{array}
\right]
$$
denotes the carachteristic vector which assumes the value 1 at the $-3$-weighted vertex and vanishes on all the other vertices.
According to Lemma \ref{remarks} if a characteristic vector starts a good full-path then its value on the $-3$-weighted vertex
is in $\{\pm 1,3\}$ and it is non-vanishing on at most one more vertex (in this case the corresponding value is necessarily equal to 2).
Therefore, we may list these vectors as follows
\begin{align*}
& \left[
\begin{array}{lllll}
 &&&0&0\\
 \alpha&0&0&&\\
 &&&0&0\\
\end{array}
\right]
,
\left[
\begin{array}{lllll}
 &&&0&0\\
 \alpha&2&0&&\\
 &&&0&0\\
\end{array}
\right]
,
\left[
\begin{array}{lllll}
 &&&0&0\\
 \alpha&0&2&&\\
 &&&0&0\\
\end{array}
\right],
\left[
\begin{array}{lllll}
 &&&2&0\\
 \alpha&0&0&&\\
 &&&0&0\\
\end{array}
\right],\\
& \left[
\begin{array}{lllll}
 &&&0&2\\
 \alpha&0&0&&\\
 &&&0&0\\
\end{array}
\right]
,
\left[
\begin{array}{lllll}
 &&&0&0\\
 \alpha&0&0&&\\
 &&&2&0\\
\end{array}
\right],
\left[
\begin{array}{lllll}
 &&&0&0\\
 \alpha&0&0&&\\
 &&&0&2\\
\end{array}
\right],
\end{align*}
where $\alpha\in\{\pm 1,3\}$. The only characteristic vectors which start good full-paths are
$$
\left[
\begin{array}{lllll}
 &&&0&0\\
 \pm1&0&0&&\\
 &&&0&0\\
\end{array}
\right],
\left[
\begin{array}{lllll}
 &&&0&2\\
 -1&0&0&&\\
 &&&0&0\\
\end{array}
\right],
\left[
\begin{array}{lllll}
 &&&0&0\\
 -1&0&0&&\\
 &&&0&2\\
\end{array}
\right].
$$
The first two vectors belong to a good full-path of length one. The third vector belongs to the following good full-path:
\begin{align*}
& \left[
\begin{array}{lllll}
 &&&0&\textbf{2}\\
 -1&0&0&&\\
 &&&0&0\\
\end{array}
\right]
\sim
\left[
\begin{array}{rrrrr}
 &&&\textbf{2}&-2\\
 -1&0&0&&\\
 &&&0&0\\
\end{array}
\right] 
\sim
\left[
\begin{array}{rrrrr}
 &&&-2&0\\
 -1&0&\textbf{2}&&\\
 &&&0&0\\
\end{array}
\right] \sim \\
&\left[
\begin{array}{rrrrr}
 &&&0&0\\
 -1&\textbf{2}&-2&&\\
 &&&2&0\\
\end{array}
\right] \sim 
\left[
\begin{array}{rrrrr}
 &&&0&0\\
 1&-2&0&&\\
 &&&\textbf{2}&0\\
\end{array}
\right] \sim
\left[
\begin{array}{rrrrr}
&&&0&0\\
1&-2&\textbf{2}&&\\
&&&-2&2\\
\end{array}
\right] \sim \\
&\left[
\begin{array}{rrrrr}
 &&&2&0\\
1&0&-2&&\\
&&&0&\textbf{2}\\
\end{array}
\right] \sim 
\left[
\begin{array}{rrrrr}
 &&&\textbf{2}&0\\
1&0&-2&&\\
&&&2&-2\\
\end{array}
\right] \sim 
\left[
\begin{array}{rrrrr}
 &&&-2&2\\
1&0&0&&\\
&&&\textbf{2}&-2\\
\end{array}
\right] \sim \\
 &\left[
\begin{array}{rrrrr}
 &&&-2&2\\
1&0&\textbf{2}&&\\
&&&-2&0\\
\end{array}
\right] \sim 
\left[
\begin{array}{rrrrr}
 &&&0&2\\
1&\textbf{2}& -2&&\\
&&&0&0\\
\end{array}
\right] \sim 
\left[
\begin{array}{rrrrr}
 &&&0& \textbf{2}\\
3&-2& 0&&\\
&&&0&0\\
\end{array}
\right] \sim \\
&\left[
\begin{array}{rrrrr}
 &&&\textbf{2}& -2\\
3&-2& 0&&\\
&&&0&0\\
\end{array}
\right] \sim
\left[
\begin{array}{rrrrr}
 &&&-2& 0\\
3&-2& \textbf{2}&&\\
&&&0&0\\
\end{array}
\right] \sim
\left[
\begin{array}{rrrrr}
 &&&0& 0\\
3&0& -2&&\\
&&&\textbf{2}&0\\
\end{array}
\right] \sim \\
&\left[
\begin{array}{rrrrr}
 &&&0& 0\\
3&0& 0&&\\
&&&-2&\textbf{2}\\
\end{array}
\right] \sim 
\left[
\begin{array}{rrrrr}
 &&&0& 0\\
\textbf{3}&0& 0&&\\
&&&0&-2\\
\end{array}
\right]  \sim
\left[
\begin{array}{rrrrr}
 &&&0& 0\\
-3&\textbf{2}& 0&&\\
&&&0&-2\\
\end{array}
\right] \sim \\
&\left[
\begin{array}{rrrrr}
 &&&0& 0\\
-1&-2& \textbf{2}&&\\
&&&0&-2\\
\end{array}
\right] \sim
\left[
\begin{array}{rrrrr}
 &&&\textbf{2}& 0\\
-1&0& -2&&\\
&&&2&-2\\
\end{array}
\right] \sim 
\left[
\begin{array}{rrrrr}
 &&&-2& 2\\
-1&0& 0&&\\
&&&\textbf{2}&-2\\
\end{array}
\right] \sim \\
&\left[
\begin{array}{rrrrr}
 &&&-2& 2\\
-1&0& \textbf{2}&&\\
&&&-2&0\\
\end{array}
\right] \sim
\left[
\begin{array}{rrrrr}
 &&&0& 2\\
-1&\textbf{2}&-2&&\\
&&&0&0\\
\end{array}
\right]  \sim 
\left[
\begin{array}{rrrrr}
 &&&0&\textbf{2}\\
1&-2&0&&\\
&&&0&0\\
\end{array}
\right] \sim \\
&\left[
\begin{array}{rrrrr}
 &&&\textbf{2}&-2\\
1&-2&0&&\\
&&&0&0\\
\end{array}
\right] \sim 
\left[
\begin{array}{rrrrr}
 &&&-2&0\\
1&-2&\textbf{2}&&\\
&&&0&0\\
\end{array}
\right] \sim
\left[
\begin{array}{rrrrr}
 &&&0&0\\
1&0&-2&&\\
&&&\textbf{2}&0\\
\end{array}
\right] \sim \\
&\left[
\begin{array}{rrrrr}
 &&&0&0\\
1&0&0&&\\
&&&-2&\textbf{2}\\
\end{array}
\right] \sim  
\left[
\begin{array}{rrrrr}
 &&&0&0\\
1&0&0&&\\
&&&0&-2\\
\end{array}
\right] .
\end{align*}
In the full-path above the boldfaced coefficients are the ones associated with the vertices on which the flip move is performed. Because of the obvious symmetry of the plumbing graph we also have a good full-path
$$\left[
\begin{array}{lllll}
 &&&0&0\\
 -1&0&0&&\\
 &&&0&2\\
\end{array}
\right]
\sim 
\dots \sim
\left[
\begin{array}{rrrrr}
 &&&0&-2\\
1&0&0&&\\
&&&0&0\\
\end{array}
\right] . $$

Via a straightforward but tedious computation one can find bad full-paths for all the other possible charcteristic vectors. Here we illustrate a specific example which will be useful later on in this proof.
\begin{align*}
& \left[
\begin{array}{rrrrr}
 &&&0&0\\
\textbf{3}&0&0&&\\
&&&0&0\\
\end{array}
\right] \sim
 \left[
\begin{array}{rrrrr}
 &&&0&0\\
-3&\textbf{2}&0&&\\
&&&0&0\\
\end{array}
\right] \sim
\left[
\begin{array}{rrrrr}
 &&&0&0\\
-1&-2&\textbf{2}&&\\
&&&0&0\\
\end{array}
\right] \sim \\
& \left[
\begin{array}{rrrrr}
 &&&\textbf{2}&0\\
-1&0&-2&&\\
&&&\textbf{2}&0\\
\end{array}
\right] \sim  
\left[
\begin{array}{rrrrr}
 &&&-2&2\\
-1&0&\textbf{2}&&\\
&&&-2&2\\
\end{array}
\right] \sim 
\left[
\begin{array}{rrrrr}
 &&&0&\textbf{2}\\
-1&2&-2&&\\
&&&0&\textbf{2}\\
\end{array}
\right] \sim \\
& \left[
\begin{array}{rrrrr}
 &&&\textbf{2}&-2\\
-1&2&-2&&\\
&&&\textbf{2}&-2\\
\end{array}
\right] \sim 
\left[
\begin{array}{rrrrr}
 &&&-2&0\\
-1&2&\textbf{2}&&\\
&&&-2&0\\
\end{array}
\right] \sim 
\left[
\begin{array}{rrrrr}
 &&&0&0\\
-1&4&-2&&\\
&&&0&0\\
\end{array}
\right] .
\end{align*}

When $c\geq 3$ the Brieskorn sphere $\Sigma(2,3,6c-2)$ can be described via the following negative-definite plumbing graph
 \[ 
\xygraph{
!{<0cm,0cm>;<1cm,0cm>:<0cm,1cm>::}
!~-{@{-}@[|(2.5)]}
!{(-1.2,1.5) }*+{\bullet}="x"
!{(-2.7,1.5) }*+{\bullet}="x1"
!{(-4.2,1.5) }*+{\bullet}="x2"
!{(-5.7,1.5) }*+{\bullet}="x3"
!{(-7.2,1.5) }*+{\dots}="x4"
!{(-8.7,1.5) }*+{\bullet}="x5"
!{(1.5,3) }*+{\bullet}="a1"
!{(3,3) }*+{\bullet}="am"
!{(1.5,0) }*+{\bullet}="c1"
!{(3,0) }*+{\bullet}="cm"
!{(-1.2,1.9) }*+{-2}
!{(-2.7,1.9) }*+{-2}
!{(-4.2,1.9) }*+{-3}
!{(-5.7,1.9) }*+{-2}
!{(-8.7,1.9) }*+{-2}
!{(1.5,3.4) }*+{-2}
!{(1.5,0.4) }*+{-2}
!{(3,3.4) }*+{-2}
!{(3,0.4) }*+{-2}
"x"-"c1"
"x"-"a1"
"a1"-"am"
"c1"-"cm"
"x1"-"x"
"x2"-"x1"
"x3"-"x2"
"x4"-"x3"
"x5"-"x4"
}
\]
where the length of the leftmost $-2$-chain is $c-2$. It follows from our previous argument (when $c=2$) and from the second statement in Lemma \ref{remarks} that if a characteristic vector starts a good full-path then it belongs to the following list
$$
\left[
\begin{array}{rrrrrrrr}
 &&&&&&0&0\\
x_1&\dots&x_{c-2}&\pm 1&0&0&&\\
&&&&&&0&0\\
\end{array}
\right],
\left[
\begin{array}{rrrrrrrr}
 &&&&&&0&2\\
x_1&\dots&x_{c-2}& -1&0&0&&\\
&&&&&&0&0\\
\end{array}
\right],
$$

$$
\left[
\begin{array}{rrrrrrrr}
 &&&&&&0&0\\
x_1&\dots&x_{c-2}&-1&0&0&&\\
&&&&&&0&2\\
\end{array}
\right],
$$
where at most one of the $x_i$'s is non-zero (and if so it is necessarily equal to $2$).

A characteristic vector of the form
$$K_0=
\left[
\begin{array}{rrrrrrrr}
 &&&&&&0&0\\
x_1&\dots&x_{c-2}&1&0&0&&\\
&&&&&&0&0\\
\end{array}
\right]
$$
does not start a good full-path if one of the $x_i$'s is non-zero. To see this suppose that we have $x_i=2$ for some 
$i$. Then, we can write down the following bad full-path
\begin{small}
\begin{align*}
K_0&\sim\dots\sim
\left[
\begin{array}{rrrrrrrr}
 &&&&&&0&0\\
\dots&-2&\textbf{2}&1&0&0&&\\
&&&&&&0&0\\
\end{array}
\right]\sim
\left[
\begin{array}{rrrrrrrr}
 &&&&&&0&0\\
\dots&0&-2&\textbf{3}&0&0&&\\
&&&&&&0&0\\
\end{array}
\right]\sim \\
& \hspace{0.957cm} \sim \left[
\begin{array}{rrrrrrrr}
 &&&&&&0&0\\
\dots &0&0&-3&\textbf{2}&0&&\\
&&&&&&0&0\\
\end{array}
\right]\sim\dots
\sim\left[
\begin{array}{rrrrrrrr}
 &&&&&&0&0\\
\dots&0&0&-1&4&-2&&\\
&&&&&&0&0\\
\end{array}
\right] \ .
\end{align*}
\end{small}
\hspace{-0.3cm} where the first omitted sequence of moves is the obvious sequence of flips which starts with a flip move 
at $x_i=2$, while the second one is the one suggested by the full-path 
$$\left[
\begin{array}{rrrrr}
 &&&0&0\\
3&0&0&&\\
&&&0&0\\
\end{array}
\right]  \sim
\dots \sim 
\left[
\begin{array}{rrrrr}
 &&&0&0\\
-1&4&-2&&\\
&&&0&0\\
\end{array}
\right]$$
described above.

Similarly, the characteristic vector
$$\left[
\begin{array}{rrrrrrrr}
 &&&&&&0&2\\
x_1&\dots&x_{c-2}& -1&0&0&&\\
&&&&&&0&0\\
\end{array}
\right]$$
does not start a good full-path if one of the $x_i$'s is non-zero. In fact, if $x_i=2$ for some $i$ then the (good) 
full-path 
$$\left[
\begin{array}{rrrrr}
 &&&0&2\\
-1&0&0&&\\
&&&0&0\\
\end{array}
\right]  \sim
\dots \sim 
\left[
\begin{array}{rrrrr}
 &&&0&0\\
1&0&0&&\\
&&&0&-2\\
\end{array}
\right]$$
induces a sequence of flip moves  
\begin{small}
\begin{align*}
\left[
\begin{array}{rrrrrrrr}
 &&&&&&0&2\\
x_1&\dots&x_{c-2}& -1&0&0&&\\
&&&&&&0&0\\
\end{array}
\right] \sim \dots
\sim \left[
\begin{array}{rrrrrrrr}
 &&&&&&0&0\\
x_1&\dots&x_{c-2}+2& 1&0&0&&\\
&&&&&&0&-2\\
\end{array}
\right]
\end{align*}
\end{small}
leading to a bad full-path 
\begin{small}
\begin{align*}
\left[
\begin{array}{rrrrrrrr}
 &&&&&&0&2\\
x_1&\dots&x_{c-2}& -1&0&0&&\\
&&&&&&0&0\\
\end{array}
\right] \sim \dots \sim 
\left[
\begin{array}{rrrrrrrr}
&&&&&&0&0\\
\dots&4&\dots& 3&0&0&&\\
&&&&&&0&-2\\
\end{array}
\right] .
\end{align*}
\end{small}
Symmetrically one may find a bad full-path
\begin{small}
\begin{align*}
\left[
\begin{array}{rrrrrrrr}
 &&&&&&0&0\\
x_1&\dots&x_{c-2}& -1&0&0&&\\
&&&&&&0&2\\
\end{array}
\right] \sim \dots \sim 
\left[
\begin{array}{rrrrrrrr}
&&&&&&0&-2\\
\dots&4&\dots& 3&0&0&&\\
&&&&&&0&0\\
\end{array}
\right] .
\end{align*}
\end{small} 
when at least one of the $x_i$'s is equal to $2$. 

Summarising we found that only a characteristic vector of the form 
$$
\left[
\begin{array}{rrrrrrrr}
 &&&&&&0&0\\
0&\dots&0&-1&0&0&&\\
&&&&&&0&2\\
\end{array}
\right],
\left[
\begin{array}{rrrrrrrr}
 &&&&&&0&2\\
0&\dots&0& -1&0&0&&\\
&&&&&&0&0\\
\end{array}
\right],
$$
$$
\left[
\begin{array}{rrrrrrrr}
 &&&&&&0&0\\
0&\dots&0&1&0&0&&\\
&&&&&&0&0\\
\end{array}
\right],
\left[
\begin{array}{rrrrrrrr}
 &&&&&&0&0\\
x_1&\dots&x_{c-1}&1&0&0&&\\
&&&&&&0&0\\
\end{array}
\right],
$$
with at most one of the $x_i$'s is non-zero and equal to $2$, can start a good full path. In fact one can easily check that all of them do. By means of Equation \ref{correctionterms} one can now conclude the argument (we omit the easy but tedious computations).   
\end{proof}

\bibliography{Bibliotesi}
\bibliographystyle{siam}

\end{document}